\def\EuScript{\mathcal}
\def\mathscr{\EuScript}
\newcommand{\defegal}{:=}                                   
\newcommand{\bbR}{\mathbb{R}}                               
\newcommand{\ad}{^{\mathrm{ad}}}                            
\newcommand{\np}[1]{(#1)}                                   
\newcommand{\Bp}[1]{\Big(#1\Big)}                           
\newcommand{\bgp}[1]{\bigg(#1\bigg)}                        
\newcommand{\Bgp}[1]{\Bigg(#1\Bigg)}                        
\newcommand{\espacea}[1]{\mathbb{#1}}                       
\newcommand{\espacef}[1]{\mathcal{#1}}                      
\newcommand{\tribu}[1]{\mathscr{#1}}                        
\newcommand{\omeg}{\Omega}                                  
\newcommand{\trib}{\tribu{A}}                               
\newcommand{\prbt}{\mathbb{P}}                              
\newcommand{\espe}{\mathbb{E}}                              
\newcommand{\va}[1]{\boldsymbol{\uppercase{#1}}}            
\newcommand{\normdelim}[1]{\np{#1}}                         
\newcommand{\Bigdelim}[1]{\Bp{#1}}                          
\newcommand{\biggdelim}[1]{\bgp{#1}}                        
\newcommand{\Biggdelim}[1]{\Bgp{#1}}                        
\newcommand{\normdelims}[2]{\normdelim{#1\mid#2}}           
\newcommand{\Besp}[2][]{\espe_{#1}\Bigdelim{#2}}            
\newcommand{\bgesp}[2][]{\espe_{#1}\biggdelim{#2}}          
\newcommand{\Bgesp}[2][]{\espe_{#1}\Biggdelim{#2}}          
\newcommand{\nespc}[3][]{\espe_{#1}\normdelims{#2}{#3}}     
\newcommand{\esper}[2][]{\espe_{#1}\left(#2\right)}           
\newcommand{\espcond}[3][]{\espe_{#1}\left(#2\ \middle\vert\ #3\right)}    
\theoremstyle{plain}
\newtheorem{thm}{Theorem}
\newtheorem{lemma}{Lemma}
\newtheorem{prop}{Proposition}
\theoremstyle{definition}
\newtheorem{dfn}{Definition}
\theoremstyle{remark}
\newtheorem{rem}{Remark}
\newtheorem{hyp}{Assumption}
\newtheorem{example}{Example}
\begin{document}

\title{Price decomposition in large-scale stochastic optimal control}

\author[K. Barty]{Kengy Barty}
\address{K. Barty, EDF R\&D, 1 avenue du G\'{e}n\'{e}ral de Gaulle,
         F-92141 Clamart Cedex, France.}
\email{kengy.barty@edf.fr}

\author[P. Carpentier]{Pierre Carpentier}
\address{P. Carpentier, ENSTA ParisTech,
         32 boulevard Victor, 75739 Paris Cedex 15, France.}
\email{pierre.carpentier@ensta-paristech.fr}

\author[G. Cohen]{Guy Cohen}
\address{G. Cohen, Universit\'{e} Paris-Est, CERMICS, \'{E}cole des Ponts ParisTech,
         6 \& 8 avenue Blaise Pascal, 77455 Marne-la-Vall\'{e}e Cedex 2.}
\email{guy.cohen@mail.enpc.fr}

\author[P. Girardeau]{Pierre Girardeau}
\address{P. Girardeau, EDF R\&D, 1 avenue du G\'{e}n\'{e}ral de Gaulle,
         F-92141 Clamart Cedex, France,
         also with Universit\'{e} Paris-Est, CERMICS and ENSTA ParisTech.}
\email{pierre.girardeau@cermics.enpc.fr}

\date{\today}

\keywords{Stochastic optimal control, Decomposition methods, Dynamic Programming}
\subjclass{93E20, 49M27, 49L20}

\begin{abstract}
	We are interested in optimally driving a dynamical system that can be influenced by exogenous noises. This is generally called a Stochastic Optimal Control~(SOC) problem and the Dynamic Programming~(DP) principle is the natural way of solving it. Unfortunately, DP faces the so-called curse of dimensionality: the complexity of solving DP equations grows exponentially with the dimension of the information variable that is sufficient to take optimal decisions~(the state variable).
	
	For a large class of SOC problems, which includes important practical problems, we propose an original way of obtaining strategies to drive the system. The algorithm we introduce is based on Lagrangian relaxation, of which the application to decomposition is well-known in the deterministic framework. However, its application to such closed-loop problems is not straightforward and an additional statistical approximation concerning the dual process is needed. We give a convergence proof, that derives directly from classical results concerning duality in optimization, and enlghten the error made by our approximation. Numerical results are also provided, on a large-scale SOC problem. This idea extends the original DADP algorithm that was presented by \citet{BartyCarpentierGirardeau09}.
\end{abstract}

\maketitle

\section*{Introduction}

Consider a controlled dynamical system over a discrete and finite time horizon. This system may be influenced by exogenous noises that affect its behaviour. We suppose that, at every instant, the decision maker is able to observe these noises and to keep these observations in memory. Since it is generally profitable to take available observations into account when designing future decisions, we are looking for strategies rather than simple decisions. Such strategies~(or policies) are feedback functions that map every instant and every possible history of the system to a decision to be made.

More precisely, we are here interested in optimization problems with a large number of variables. The typical application we have in mind is the following. Consider a power producer that owns a certain number of power units. Each unit has its own local characteristics such as physical constraints that restrain the set of feasible decisions, and production costs that depend on the type of fuel that is used to produce power. The power producer has to control the power units so that a global power demand is met at every instant. The power demand, as well as other parameters such as inflows in water reservoirs or unit breakdowns, are random. Naturally, he is looking for strategies that make the production cost minimal, over a given time horizon. In such a problem, both the number of power units and the number of time steps are usually large.

One classical approach when dealing with stochastic dynamic optimization problems is to discretize the random inputs of the problem using scenario trees. Such an approach has been widely studied within the Stochastic Programming community~\citep[see the book by][for an overview of this methodology]{ShapiroDentchevaRuszczynski09}. One of the advantages of such a technique is that as soon as the scenario tree is drawn, the derived problem can be treated by classical Mathematical Programming techniques. Thus, a number of decomposition methodologies have been proposed~\citep[][Chapter~3]{StochasticDecomposition96, CarpentierCohenCulioliRenaud96,StochasticProgramming03} and even applied to energy planning problems~\citep{BacaudLemarechalRenaudSagastizabal01}. A general theoteric point of view concerning the way to combine the discretization of expectation together with the discretization of information is given by~\citet{TheseBarty}. However, in a multi-stage setting, this methodology suffers from the drawbacks that arise with scenario trees. As it was pointed out by~\citet{Shapiro06}, the number of scenarios needed to achieve a given accuracy grows exponentially with the number of time steps of the problem.

The other natural approach to solve SOC problems is to rely on the Dynamic Programming~(DP) principle~\citep[see][]{Bellman57,BertsekasDP}. The core of the DP approach is the definition of a state variable that is, roughly speaking, the variable that, in conjunction with the time variable, is sufficient to take an optimal decision at every instant. It does not have the drawback of the scenario trees concerning the number of time steps since strategies are, in this context, depending on a state variable whose space dimension usually does not grow with time\footnote{In the case of power management, the state dimension is usually the number of power units.}. However, DP suffers from another drawback which is the so-called \emph{curse of dimensionality}: the complexity of solving the DP equation grows exponentially with the state space dimension. Hence, brutally solving the DP equation is generally intractable when the state space dimension goes beyond several units. Recently, \citet{VezolleVialleWarin09} were able to solve it on a~$10$-state-variables energy management problem, using parallel computation coupled with adequate data distribution.

Another popular idea is to represent the value functions~(solutions of the DP equation) as a linear combination of a priori chosen basis functions~\citep[see among others][Sect.~6.5]{BellmanDreyfus59,BertsekasTsitsiklis96}. This approach, called Approximate Dynamic Programming or often Least-Squares Monte-Carlo, has also become very popular in the context of American option pricing through the work of~\citet{LongstaffSchwartz01options}. This approximation reduces the complexity of solving the DP equation drastically. However, in order to be practically efficient, such an approach requires some a priori information about the problem, in order to define a well suited functional subspace. Indeed, there is no systematic means to choose the basis functions and several choices have been proposed in the literature~\citep{deFariasVanRoy03,TsitsiklisVanRoy96,BouchardWarin10}.

When dealing with large-scale optimization problems, the decomposition/co\-ordi\-nation approach aims at finding a solution to the original problem by iteratively solving smaller-dimensional subproblems. In the deterministic case, several types of decomposition have been proposed (e.g. by prices or by quantities) and unified in a general framework using the Auxiliary Problem Principle by \cite{Cohen80}. In the open-loop stochastic case, i.e. when controls do not rely on any observation, \cite{CohenCulioli90} proposed to take advantage of both decomposition techniques and stochastic gradient algorithms. These techniques have been extended in the closed-loop stochastic case by \cite{BartyRoyStrugarek05}, but so far they fail to provide decomposed state dependent strategies in the Markovian case. This is because a subproblem optimal strategy depends on the state of the whole system, not only on the local state. In other words, decomposition approaches are meant to decompose the control space, namely the range of the strategy, but the numerical complexity of the problems we consider here also arises because of the dimensionality of the state space, that is to say the domain of the strategy.

We here propose a way to use price decomposition within the closed-loop stochastic case. The coupling constraints, namely the constraints preventing the problem from being naturally decomposed, are dualized using a Lagrange multiplier (price). At each iteration, the price decomposition algorithm solves each subproblem using the current price, then uses the solutions to update the price. In the stochastic context, price is a random process whose dynamics is not available, so the subproblems do not in general fall into the Markovian setting. However, in a specific instance of this problem, \cite{TheseStrugarek} exhibited a dynamics for the optimal multiplier, and he showed that these dynamics were independent with respect to the decision variables. Hence it was possible to come down to the Markovian framework and to use DP to solve the subproblems in this case. Following this idea, \citet{BartyCarpentierGirardeau09} proposed to choose a parametrized dynamics for these multipliers in such a way that solving subproblems using DP becomes possible. While the approach, called Dual Approximate Dynamic Programming~(DADP), showed promising results on numerical examples, it suffers from the fact that the induced restrained dual space is non-convex. This led to some numerical instabilities and, probably more important, it was not possible to give convergence results for the algorithm. We here propose to extend DADP in a more general way that allows us to derive convergence results and solves the problem of numerical instabilities.

The paper is organized as follows. In Section~\ref{sec:Math}, we present the general SOC problem and the DP principle. Then we concentrate on a more specific class of problems, that we call decomposable problems, and recall the previous version of the DADP algorithm. In Section~\ref{sec:Sspb}, we present the new version we propose and give convergence results for the algorithm. Finally, in Section~\ref{sec:Num}, we apply DADP to two numerical examples, the first being the one from the previous paper by \citet{BartyCarpentierGirardeau09} and the second one being a more realistic power management example.

\section{Mathematical formulation} \label{sec:Math}

\subsection{General problem setting}

All along the paper, random variables are denoted using \textbf{bold} letters. Consider a discrete and finite time horizon~$0,~1,~\dots,~T$ and a probability space~$(\omeg, \trib, \prbt)$. To define a stochastic dynamical system, we need:
\begin{itemize}
	\item a stock process~$\va{x}=(\va{x}_0, \dots, \va{x}_T)$ which represents the physical states of the system through time, the value of~$\va{x}_t$ lying, at every instant~$t$, in a Hilbert space~$\espacea{X}_t$;
	\item a control process~$\va{u}=(\va{u}_0, \dots, \va{u}_{T-1})$, the value of~$\va{u}_t$ lying, at every instant~$t$, in a Hilbert space~$\espacea{U}_t$;
	\item a noise process~$\va{w}=(\va{w}_0, \dots, \va{w}_{T-1})$, the value of~$\va{w}_t$ lying, at every instant~$t$, in a Hilbert space~$\espacea{W}_t$.
\end{itemize}
The spaces~$\espacea{X}_t$, $\espacea{U}_t$ and~$\espacea{W}_t$ are generally finite-dimensional spaces. In the sequel, we suppose~$\espacea{X}_t=\bbR^n$ and~$\espacea{U}_t=\bbR^m$. The decision variable~$\va{u}_t$ being a random variable, and our purpose being to use variational techniques that require the notion of gradient, it is natural to suppose that~$\va{u}_t$ lies in a Hilbert space~$\espacef{U}_t$, for example~$L^2(\omeg, \trib, \prbt; \espacea{U}_t)$.

The three types of variables are linked together in the following way. At every time step~$t$, there exists a function~$f_t$ (the dynamics of the system) that maps the triplet~$(\va{x}_t, \va{u}_t, \va{w}_t)$ to the next stock value~$\va{x}_{t+1}$. Let~$(\trib_0, \dots, \trib_{T-1})$ be the filtration associated with the stochastic process~$\va{w}$. We suppose that, at every time step~$t$, the decision maker is able to observe and to keep in memory all the past history of~$\va{w}$ up to time~$t$. The causality principle states that the decision~$\va{u}_t$ at time~$t$ is~$\trib_t$-measurable, i.e. only depends on past observations. Moreover, at each time step~$t$, a cost~$C_t(\va{x}_t, \va{u}_t, \va{w}_t)$ is incurred. Finally, at the final time~$T$, a cost~$K(\va{x}_T)$ is added. The Stochastic Optimal Control (SOC) problem we would like to solve hence reads:
\begin{subequations} \label{eqn:P}
\begin{align}
	\min_{\va{x}, \va{u}} \quad& \esper{\sum_{t=0}^{T-1} C_t\left(\va{x}_t, \va{u}_t, \va{w}_t\right) + K\left(\va{x}_T\right)}, \\
\intertext{subject to dynamics constraints:}
	&\va{x}_{t+1} = f_t\left(\va{x}_t, \va{u}_t, \va{w}_t\right), \qquad \forall t=0, \dots,  T-1, \label{eqn:P-2} \\
	&\va{x}_0 \text{ is given}, \\
\intertext{as well as bound constraints:}
	&\underline{x}_t \leq \va{x}_t \leq \overline{x}_t, \qquad \forall t=1, \dots, T,  \label{eqn:P-4} \\
	&\underline{u}_t \leq \va{u}_t \leq \overline{u}_t, \qquad \forall t=0, \dots, T-1,  \label{eqn:P-5} \\
\intertext{static constraints:}
	&g_t\left(\va{x}_t, \va{u}_t, \va{w}_t\right) = 0, \qquad \forall t=0, \dots,  T-1, \label{eqn:P-6}
\intertext{and the non-anticipativity constraint:}
	&\va{u}_t \text{ is } \trib_t\text{-measurable}.
\end{align}
\end{subequations}
Constraints~\eqref{eqn:P-2}, \eqref{eqn:P-4}, \eqref{eqn:P-5} and \eqref{eqn:P-6} have to be understood in the~$\prbt$-almost sure sense. We give examples for constraint~\eqref{eqn:P-6} in~\S\ref{sec:Sspb}. With no further assumptions, Problem~\eqref{eqn:P} cannot generally be solved analytically, except for quite particular cases among which is, for instance, the Linear Quadratic Gaussian (LQG) case. One has to be aware that, when solving this problem, one is looking for functions that map every possible history of the system to a decision; the domain of such a function is clearly growing with time and representing it on a computer rapidly becomes intractable.

\subsection{The Dynamic Programming Principle}

Fortunately enough, control theory helps us reduce the size of the optimal strategy's domain in some cases. Let us first make the following assumption.
\begin{hyp} \label{hyp:Indpdt}
	Noises~$\va{w}_0, \dots, \va{w}_{T-1}$ are independent over time.
\end{hyp}
Now define functions~$V_t$, for every time step~$t=0, \dots, T$, as:
\begin{equation*}
	V_t\left(x\right) = \min_{\shortstack{\scriptsize$\va{x}_t, \dots, \va{x}_T$\\\scriptsize$\va{u}_t, \dots, \va{u}_{T-1}$}} \espcond{\sum_{s=t}^{T-1} C_s\left(\va{x}_s, \va{u}_s, \va{w}_s\right) + K\left(\va{x}_T\right)}{\va{x}_t=x}, \qquad \forall x \in \espacea{X}_t,
\end{equation*}
subject to the same\footnote{while starting at time~$t$} constraints as in Problem~\eqref{eqn:P}. Function~$V_t$ represents the minimal remaining cost of the problem when starting at time~$t$, for every possible stock value~$x$.

Under Assumption~\ref{hyp:Indpdt}, the Dynamic Programming~(DP) principle states that the variable~$\va{x}_t$, along with the current noise value~$\va{w}_t$, contains all the information that is sufficient to take the optimal decision at time~$t$, hence the term \emph{state} variable. Moreover, it provides a way to compute functions~$V_t$, that we now call Bellman functions~(or value functions), as well as optimal strategy, in a backward manner.
\begin{subequations} \label{eqn:DP}
\begin{align}
	V_T\left(x\right) &= K\left(x\right), \qquad \forall x \in \espacea{X}_T, \\
\intertext{and, for every time step~$t=T-1, \dots, 0$:}
	V_t\left(x\right) &= \esper{\min_u C_t\left(x, u, \va{w}_t\right) + V_{t+1}\left(f_t\left(x, u, \va{w}_t\right)\right)}, \qquad \forall x \in \espacea{X}_t.
\end{align}
\end{subequations}
Compared with the original setting where the optimal strategy domain was growing along with time steps, the DP principle drastically reduces the size of the information needed to make an optimal decision.

\begin{rem}[About the overtime independence]
	In the case when the model is such that noises that affect the system have some sort of correlation through time, one can always explicit the dynamics of the noise variable and add it to the dynamics of~$\va{x}_t$, thus defining a new~(albeit larger!) state variable as well as a new noise variable that is now independent over time.
\end{rem}

\begin{rem}[Hazard-Decision setting] \label{rem:hd}
	The reader may have noticed that the way the non-anticipativity constraint in written allows the decision maker at time~$t$ to observe the current noise value~$\va{w}_t$ before choosing the control~$\va{u}_t$. In such a setting the optimal decision at time~$t$ depends on both the state variable~$\va{x}_t$ and the noise variable~$\va{w}_t$ whereas the value function only depends on the state variable~$\va{x}_t$.
\end{rem}

Note however that the dimension of the state space~$\espacea{X}_t$ might still be quite large. Yet the complexity of solving the DP equation~\eqref{eqn:DP} grows exponentially with the dimension of~$\espacea{X}_t$; this unpleasant feature is well known as the \emph{curse of dimensionality} and prevents us from solving this equation by discretization when the state space dimension is, say, greater than~$5$.

\subsection{Decomposable problem setting}

Let us now present a particular instance of Problem~\eqref{eqn:P} on which we are able to reduce even more the size of the information needed to take a reasonable decision.

We consider a system which consists of~$N$ subsystems\footnote{We often use the term ``\emph{units}'' for subsystems.}, whose dynamics and cost functions are independent one from another. More precisely, the state $\va{x}_t$~(respectively the control $\va{u}_t$) of the global system writes $\big(\va{x}_t^1, \dots, \va{x}_t^N\big)$ with $\va{x}_t^i \in L^2\left(\omeg, \trib, \prbt; \bbR^{n_i}\right)$ (resp. $\big(\va{u}_t^1, \dots, \va{u}_t^N\big)$ with $\va{u}_t^i \in L^2\left(\omeg, \trib, \prbt; \bbR^{m_i}\right)$) and $n=\sum_{i=1}^N n_i$ (resp. $m=\sum_{i=1}^N m_i$), so that the global dynamics $\va{x}_{t+1} = f_t\left(\va{x}_t, \va{u}_t, \va{W}_t\right)$ can be written independently unit by unit: $\va{x}_{t+1}^i = f_t^i\left(\va{x}_t^i, \va{u}_t^i, \va{W}_t\right)$, $i=1, \dots, N$. In the same way, the global cost $C_t\left(\va{x}_t, \va{u}_t, \va{W}_t\right)$ is equal to the sum of the local unit costs $C_t^i\left(\va{x}_t^i, \va{u}_t^i, \va{W}_t\right)$, $i=1, \dots, N$. At the end of the time period, each unit~$i$ causes a cost~$K^i$ that only depends on its final state~$\va{x}_T^i$.

Remark that, without further constraints, the induced SOC problem can be stated independently unit by unit, though the same noise variable affects all units (see Appendix~\ref{app:decomposition} for a precise proof). Hence, under Assumption~\ref{hyp:Indpdt}, the solving of the DP equation can be decomposed unit by unit. For each unit, the optimal strategy depends only on its local state\footnote{and on the noise at the current time step because we are in the Hazard-Decision setting}, which is usually far smaller than the dimension of the global state space.

Consider now a static constraint~\eqref{eqn:P-6} that couples the units together. We suppose that such a coupling arises from a set of static $\bbR^d$-valued constraints, the constraint at time step $t$ reading $\sum_{i=1}^N g_t^i\left(\va{x}_t^i, \va{u}_t^i, \va{w}_t\right) = 0$. This kind of coupling constraint is natural in many industrial applications, including the case of a power management problem that we already mentioned in the introduction: the sum of the productions of the power units must meet an uncertain power demand.

The decomposable problem we are interested in solving in the following reads:
\begin{subequations} \label{eqn:Pdec}
\begin{align}
	\min_{\va{x}, \va{u}} \quad& \esper{\sum_{t=0}^{T-1} \sum_{i=1}^N C_t^i\left(\va{x}_t^i, \va{u}_t^i, \va{w}_t\right) + \sum_{i=1}^N K^i\left(\va{x}_T^i\right)} \\
\intertext{subject to dynamics constraints:}
	&\va{x}_{t+1}^i = f_t^i\left(\va{x}_t^i, \va{u}_t^i, \va{w}_t\right), \qquad \forall t=0, \dots,  T-1, \forall i=1, \dots, N, \label{eqn:Pdec-2} \\
	&\va{x}_0^i \text{ is given}, \qquad \forall i=1, \dots, N, \label{eqn:Pdec-3} \\
\intertext{as well as bound constraints:}
	&\underline{x}_t^i \leq \va{x}_t^i \leq \overline{x}_t^i, \qquad \forall t=1, \dots, T, \forall i=1, \dots, N, \\
	&\underline{u}_t^i \leq \va{u}_t^i \leq \overline{u}_t^i, \qquad \forall t=0, \dots, T-1, \forall i=1, \dots, N, \\
\intertext{static constraints:}
	&\sum_{i=1}^N g_t^i\left(\va{x}_t^i, \va{u}_t^i, \va{w}_t\right) = 0, \qquad \forall t=0, \dots,  T-1, \label{eqn:Pdec-6} \\
\intertext{and the non-anticipativity constraint:}
	&\va{u}_t^i \text{ is } \trib_t\text{-measurable}, \qquad \forall t=0, \dots, T-1, \forall i=1, \dots, N. \label{eqn:Pdec-7}
\end{align}
\end{subequations}

There are three types of coupling in Problem~\eqref{eqn:Pdec}:
\begin{itemize}
	\item The first comes from the state dynamics~\eqref{eqn:Pdec-2} that induce a temporal coupling.
	\item The second one arises from the static constraints~\eqref{eqn:Pdec-6} that induce a spatial coupling: they link together all the subsystems at each time step~$t$.
	\item The third type of coupling is informational: it comes from the causality constraint~\eqref{eqn:Pdec-7}, which prevents us from decomposing directly scenario by scenario~: if two realizations of the noise process are identical up to time~$t$, then the same control has to be applied at time~$t$ on both realizations.
\end{itemize}

Constraints~\eqref{eqn:Pdec-6} prevent us from decomposing the optimization problem unit by unit: the solution~$\va{u}_t^i$ for unit~$i$ and time~$t$ has to be searched as a feedback function~$\varphi_t^i$ depending on the current noise value and on the whole stock variable~$\va{x}_t=(\va{x}_t^1, \dots, \va{x}_t^N)$ rather than on the local stock variable~$\va{x}_t^i$! Adding the coupling constraint~\eqref{eqn:Pdec-6} drastically changed the structure of the problem.

\begin{rem}[Local and global noises] \label{rem:fulldec}
	Applications we have in mind are power management problems which are completely ``flower-shaped'', in the following sense. The noise variable~$\va{w}_t$ at time~$t$ is composed of two different kinds of noise:
	\begin{itemize}
		\item a \emph{local} noise~$\va{w}_t^i$ for every subsystem~$i$, i.e. at every petal of the flower~(uncertain inflows entering a water reservoir, for instance);
		\item a \emph{global} noise~$\va{d}_t$ at the center of the flower~(a total power demand, for instance).
	\end{itemize}
	In such a setting, only the local noise appears in the cost function and in the dynamics, leading to functions of the form:
	\begin{equation*}
		C_t^i\left(\va{x}_t^i, \va{u}_t^i, \va{w}_t^i\right) \text{ and } f_t^i\left(\va{x}_t^i, \va{u}_t^i, \va{w}_t^i\right),
	\end{equation*}
	while the global noise appears only in the coupling constraint as, for instance:
	\begin{equation*}
		\sum_{i=1}^N g_t^i\left(\va{x}_t^i, \va{u}_t^i\right) = \va{d}_t.
	\end{equation*}
	Keeping this particular case in mind shall give us some insight about how to decompose the global problem as well as possible. This is explained in more details in~\S\ref{ssec:proj} and such settings are treated in the numerical experiments of~\S\ref{sec:Num}.
\end{rem}

\subsection{Previous paper} \label{ssec:previous}

In a previous study~\citep{BartyCarpentierGirardeau09}, the authors proposed a way of handling Problem~\eqref{eqn:Pdec} by approximate Lagrangian decomposition. The proposed algorithm, called Dual Approximate Dynamic Programming~(DADP) is as follows. Let us introduce the Lagrangian of Problem~\eqref{eqn:Pdec}:
\begin{multline*}
	\mathcal{L}\left(\va{x}, \va{u}, \va{\lambda}\right) \defegal \Bgesp{\sum_{t=0}^{T-1} \sum_{i=1}^N \left(C_t^i\left(\va{x}_t^i, \va{u}_t^i, \va{w}_t\right) + \va{\lambda}_t^\top g_t^i\left(\va{x}_t^i, \va{u}_t^i, \va{w}_t\right)\right) \\
	+ \sum_{i=1}^N K^i\left(\va{x}_T^i\right)},
\end{multline*}
with~$\va{\lambda}_t \in L^2(\omeg, \trib, \prbt; \bbR^d)$ the Lagrange multiplier of the coupling constraint~\eqref{eqn:Pdec-6} and~$\va{\lambda} \defegal (\va{\lambda}_0, \dots, \va{\lambda}_{T-1})$. Note that, since the dualized constraint is~$\trib_t$-measurable, the Lagrange multiplier~$\va{\lambda}_t$ need only to have the same measurability.

Problem~\eqref{eqn:Pdec} is always equivalent to:
\begin{equation*}
	\min_{\va{x}, \va{u}} \max_{\va{\lambda}} \quad \mathcal{L}\left(\va{x}, \va{u}, \va{\lambda}\right),
\end{equation*}
where the minimization is subject to all constraints of Problem~\eqref{eqn:Pdec} except constraint~\eqref{eqn:Pdec-6}. If~$\mathcal{L}$ has a saddle point~(see Appendix~\ref{app:uzawa} for a definition and a characterization of saddle points), then this problem is equivalent to the so-called dual problem:
\begin{equation} \label{eqn:D}
	\max_{\va{\lambda}} \min_{\va{x}, \va{u}} \quad \mathcal{L}\left(\va{x}, \va{u}, \va{\lambda}\right),
\end{equation}
under, once again, the same constraints as in Problem~\eqref{eqn:Pdec} except the coupling constraint~\eqref{eqn:Pdec-6}.

The key point of the so-called price decomposition algorithm is that the inner minimization problem can be split into~$N$ subproblems, each one involving a single subsystem~(once again, see Appendix~\ref{app:decomposition} for more details). One might think that solving these subproblems is much simpler than solving the original global problem. This is not the case here: because the dual variable~$\va{\lambda}$ is a stochastic process that depends in general on the whole history of the system, we cannot reasonably make the overtime independence assumption that leads to the DP principle and subproblems are just as hard as Problem~\eqref{eqn:P}!

The idea of \citet{BartyCarpentierGirardeau09} is to force the dual process to satisfy a prescribed dynamics:
\begin{subequations} \label{eqn:DynLambda}
\begin{align}
	\va{\lambda}_0 &= h_{\alpha_0}\left(\va{w}_0\right), \\
	\va{\lambda}_{t+1} &= h_{\alpha_{t+1}}\left(\va{\lambda}_t, \va{w}_{t+1}\right), \qquad \forall t=0, \dots, T-2,
\end{align}
\end{subequations}
where~$h_{\alpha_t}$ is an a priori chosen function parametrized by~$\alpha_t \in \bbR^q$. We note~$\alpha=(\alpha_0, \dots, \alpha_{T-1})$. Given a vector~$\alpha^k$ of coefficients at iteration~$k$ of the algorithm which defines the current values of the dual variables, the first step of DADP is to solve the~$N$ subproblems by DP with state~$(\va{x}_t^i, \va{\lambda}_t)$. In order to update the Lagrange multipliers, the authors propose to draw~$S$ trajectory samples of the noise~$\va{w}$ and integrate the dynamics~\eqref{eqn:Pdec-2}--\eqref{eqn:Pdec-3} and~\eqref{eqn:DynLambda} using the optimal feedback laws obtained at the first step, thus obtaining~$S$ sample trajectories of~$\va{x}^k$, $\va{u}^k$ and~$\va{\lambda}^k$. A gradient step is then performed sample by sample:
\begin{equation*}
	\va{\lambda}_t^{k+\frac{1}{2},s} = \va{\lambda}_t^{k,s} + \rho_t \times \sum_{i=1}^N g_t^i\left(\va{x}_t^{i,k,s}, \va{u}_t^{i,k,s}, \va{w}_t^{s}\right), \qquad \forall s=1, \dots, S,
\end{equation*}
with~$\rho_t$ obeying the rules of the step-size choice in Uzawa's algorithm~(see Appendix~\ref{app:uzawa}). Finally, we solve the following regression problem:
\begin{multline*}
	\min_{\alpha_0, \dots, \alpha_{T-1}} \sum_{s=1}^S \bigg( \left\Vert h_{\alpha_0}\left(\va{w}_0^s\right) - \va{\lambda}_0^{k+\frac{1}{2},s} \right\Vert_{\bbR^d}^2 \\
	+\sum_{t=0}^{T-2} \left\Vert h_{\alpha_{t+1}}\left(\va{\lambda}_t^{k+\frac{1}{2},s}, \va{w}_{t+1}^s\right) - \va{\lambda}_{t+1}^{k+\frac{1}{2},s} \right\Vert_{\bbR^d}^2 \bigg).
\end{multline*}
The last minimization produces coefficients~$\alpha^{k+1}$ which define, using Equation~\eqref{eqn:DynLambda}, a new process~$\va{\lambda}^{k+1}$.

This procedure has several advantages, notably that its complexity is linear with respect to the number~$N$ of subproblems and that it may lead, depending on the choice for the dual dynamics~$h$, to tractable approximations of the original problem. The authors illustrate this fact on a small example on which they are able to compare standard DP and DADP.

Still, it has some drawbacks, mainly theoretical. First of all, the shape of the dynamics introduced for the dual process is arbitrarily and once for all chosen and the quality of the result depends on this choice. Moreover, this dynamics defines a subspace which is non-convex. The next iterate~$\va{\lambda}^{k+1}$ being a projection on this subspace, it is not well defined and some oscillations observed in practice may be due to this fact. Finally, this non-convexity prevents us from obtaining convergence results for this algorithm.

\section{Dual Approximate Dynamic Programming revisited} \label{sec:Sspb}

We now propose a new version of the DADP algorithm and show how it overcomes the above mentioned drawbacks encountered with the original algorithm. In this new approach, we do not suppose a given dynamics for the multipliers anymore. Still, we use the standard price decomposition algorithm and perform the update of the multipliers scenario-wise using the classical gradient step:
\begin{equation*}
	\va{\lambda}_t^{k+1,s} = \va{\lambda}_t^{k,s} + \rho_t \times \sum_{i=1}^N g_t^i\left(\va{x}_t^{i,k,s}, \va{u}_t^{i,k,s}, \va{w}_t^{s}\right), \qquad \forall s=1, \dots, S.
\end{equation*}
The difficulty is now to solve the subproblems, as explained in~\S\ref{ssec:proj}.

\subsection{Projection of the dual process} \label{ssec:proj}

After Lagrangian decomposition of Problem~\eqref{eqn:Pdec} with a given multiplier~$\va{\lambda}$, the~$i$-th subproblem reads:
\begin{subequations} \label{eqn:subP}
\begin{align}
	\min_{\va{x}^i, \va{u}^i} \quad& \esper{\sum_{t=0}^{T-1} \left(C_t^i\left(\va{x}_t^i, \va{u}_t^i, \va{w}_t\right) + \va{\lambda}_t^\top g_t^i\left(\va{x}_t^i, \va{u}_t^i, \va{w}_t\right)\right) + K^i\left(\va{x}_T^i\right)} \\
\intertext{subject to dynamic constraints:}
	&\va{x}_{t+1}^i = f_t^i\left(\va{x}_t^i, \va{u}_t^i, \va{w}_t\right), \qquad \forall t=0, \dots,  T-1, \label{eqn:subP-2} \\
	&\va{x}_0^i \text{ is given}, \\
\intertext{as well as bound constraints:}
	&\underline{x}_t^i \leq \va{x}_t^i \leq \overline{x}_t^i, \qquad \forall t=1, \dots, T, \\
	&\underline{u}_t^i \leq \va{u}_t^i \leq \overline{u}_t^i, \qquad \forall t=0, \dots, T-1, \\
\intertext{and the non-anticipativity constraint:}
	&\va{u}_t^i \text{ is } \trib_t\text{-measurable}. \label{eqn:subP-6}
\end{align}
\end{subequations}

As it was already mentioned, since the dual stochastic process~$\va{\lambda}$ generally depends on the whole history of the process, solving this problem is in general as complex as solving the original problem. In order to bypass this difficulty, let us choose at each time step~$t$ a random variable~$\va{y}_t^i$ that is measurable with respect to~$\trib_t$. We call~$\va{y}^i = (\va{y}_0^i, \dots, \va{y}_{T-1}^i)$ the information process for subsystem~$i$. The idea is to rely on a short memory process~$\va{y}^i$. Note that we require that this random process is not influenced by controls. We propose to replace Problem~\eqref{eqn:subP} by:
\begin{equation} \label{eqn:subPapprox}
	\min_{\va{x}^i, \va{u}^i} \, \esper{\sum_{t=0}^{T-1} \left(C_t^i\left(\va{x}_t^i, \va{u}_t^i, \va{w}_t\right) + \espcond{\va{\lambda}_t}{\va{y}_t^i}^\top g_t^i\left(\va{x}_t^i, \va{u}_t^i, \va{w}_t\right)\right) + K^i\left(\va{x}_T^i\right)},
\end{equation}
subject to constraints~\eqref{eqn:subP-2}--\eqref{eqn:subP-6}.

Let us first examine the special situation in which the information variable~$\va{y}_t^i$ only depends on the current noise~$\va{w}_t$. The process~$\va{y}^i$ does not add memory in the system so that Problem~\eqref{eqn:subPapprox} can be solved using the standard DP equation:
\begin{align*}
	V_T^i\left(x\right) &= K^i\left(x\right), \qquad \forall x \in \espacea{X}_T^i, \\
	V_t^i\left(x\right) &= \Besp{\min_{u \in \espacea{U}^i} C_t^i\left(x, u, \va{w}_t\right) + \espcond{\va{\lambda}_t}{\va{y}_t^i}^\top g_t^i\left(x, u, \va{w}_t\right)\\
	&\hspace{165pt} + V_{t+1}^i\left(f_t^i\left(x,u,\va{w}_t\right)\right)}, \qquad \forall x \in \espacea{X}_t^i.
\end{align*}
The expectation quadrature only involves the noise variable~$\va{w}_t$. Remember, as explained in Remark~\ref{rem:hd}, that we are in the ``hazard-decision'' setting: even though the control at each instant~$t$ depends on both~$\va{x}_t^i$ and~$\va{w}_t$, the Bellman function only depends on~$\va{x}_t^i$.

Because of the overtime independence of the information variables~$\va{y}_t^i$, we have to solve DP equations whose dimension is the subsystem dimension~$n_i$. Let us give three examples of choices for~$\va{y}_t^i$.

\begin{example}[Maximal information]
	One can choose to include in~$\va{y}_t^i$ all the noise at time~$t$. As already explained in Remark~\ref{rem:fulldec}, the cost function and dynamics of a subsystem may only depend on a part of the whole noise~$\va{w}_t$~(a kind of \emph{local} information denoted by~$\va{w}_t^i$ in Remark~\ref{rem:fulldec}). Yet some \emph{global} noise, denoted by~$\va{d}_t$ in Remark~\ref{rem:fulldec} may appear in the coupling constraint~(e.g. a global power demand). Hence this maximal choice for the information variable makes the multiplier depend on both local and global information: this shall improve the subsystem's vision of the rest of the system and hence improves strategies. Note, however, that including all the noise at time~$t$ in the information variable is only possible in practice when the noise dimension is not too large. Indeed, the information variable appears in a conditional expectation, whose computation is subject to the curse of dimensionality.
\end{example}

\begin{example}[Minimal information] \label{ex:mininfo}
	On the opposite, one can choose~$\va{y}_t^i=0$ or any other constant. The dual stochastic process is then approximated by its expectation at every instant. Compared to the previous example, there is no conditional expectation anymore but one obtains a strategy that corresponds to the vision of an average price.
\end{example}

\begin{example}[In between]
	One can choose~$\va{y}_t^i$ of the form~$h_t^i(\va{w}_t)$. In practice, this choice will be guided by the intuition one has on which information mostly ``explains'' the optimal price of the system. One has to make a compromise between sufficient information to take reasonable actions and a not too large information variable to be able to compute the conditional expectation in~\eqref{eqn:subPapprox}.
\end{example}

Let us move towards the general case where one can choose to keep some information in memory. In other words, one can choose an  information variable that has a Markovian dynamics, i.e. of the form~$\va{y}_{t+1}^i = h_t^i(\va{y}_t^i, \va{w}_{t+1})$. In order to derive a DP equation in this case, one has to augment the state vector by embedding~$\va{y}_t^i$, that is the necessary memory to compute the next information variable. Thus, the Bellman function associated with the~$i$-th subproblem depends, at time~$t$, on both~$\va{x}_t^i$ and~$\va{y}_{t-1}^i$. The DP equation writes:
\begin{align*}
	V_t^i\left(x, y\right) = \Besp{\min_u \quad& C_t^i\left(x, u, \va{w}_t\right) + \espcond{\va{\lambda}_t^\top}{\va{y}_t^i} \cdot g_t^i\left(x,u,\va{w}_t\right) \\
	&\hspace{5cm}+ V_{t+1}^i\left(f_t^i\left(x, u, \va{w}_t\right), \va{y}_t^i\right)}, \\
	\text{with} \quad& \va{y}_t^i = h_{t-1}^i\left(y, \va{w}_t\right).
\end{align*}
When solving this equation, one obtains controls as feedback functions on the local stock~$\va{x}_t^i$, the current noise~$\va{w}_t$ and the information variable~$\va{y}_{t-1}^i$ of the previous time step. The index gap between information and stock variables comes from the ``hazard-decision'' setting: at time~$t$, the information that is used to take decisions is the conjunction of the information kept in memory~(that has index~$t-1$) and of the noise observed at the current time step~$\va{w}_t$. The sketch of the DADP algorithm is depicted in Figure~\ref{fig:decomposition-schema}.
\begin{figure}[tb]
\begin{center}
	\includegraphics{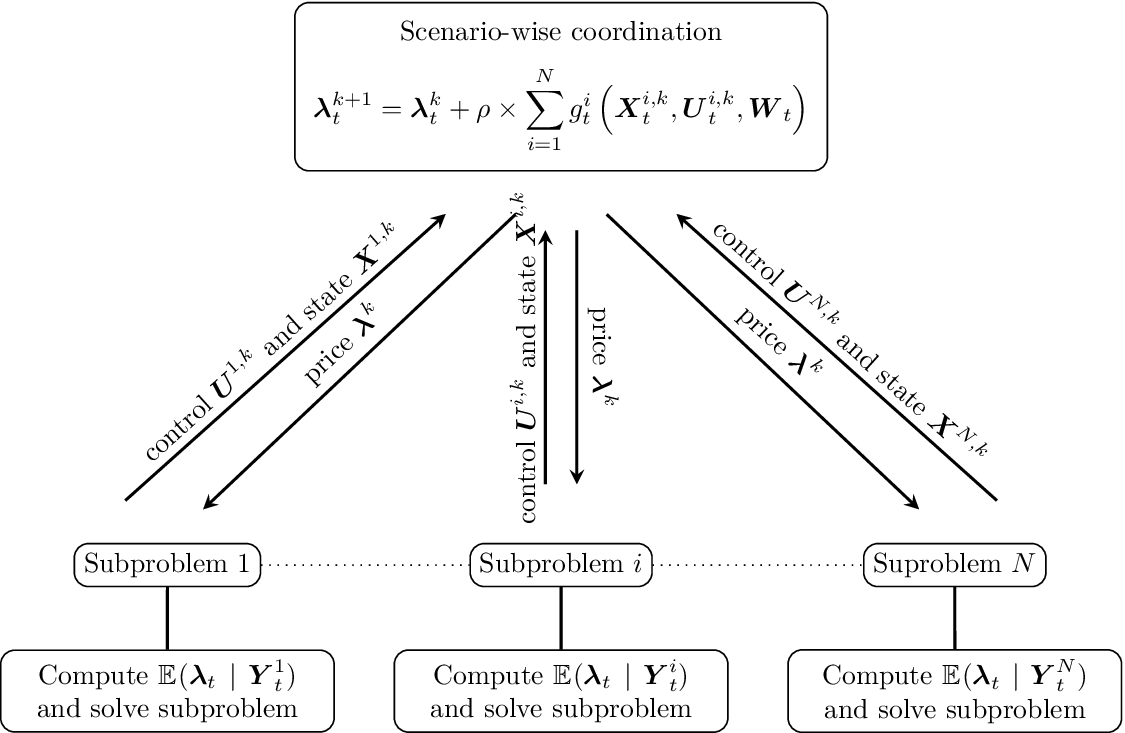}
\end{center}
\caption{\label{fig:decomposition-schema}Dual Approximate Dynamic Programming}
\end{figure}

\begin{example}[Perfect memory] \label{ex:maxinfo}
	The choice~$\va{y}_t^i = \left(\va{w}_0, \dots, \va{w}_t\right)$ stands in the Markovian case. We have then~$\espcond{\va{\lambda}_t}{\va{y}_t^i} = \va{\lambda}_t$. This choice hence allows us to model the dual variable perfectly, but the induced DP equation is unsolvable in practice.
\end{example}

\begin{example}[\citealp{TheseStrugarek}]
	In his PhD thesis, Strugarek exhibited a case when an exact model for the dual process can be obtained. His example is inspired from the kind of power management problem mentioned in the introduction, where~$N$ water reservoirs have to contribute to a global power demand, the rest of this demand being produced by fossil fuel. The noise at each time step~$t$ is composed of a scalar inflow~$\va{a}_t^i$ for each reservoir~$i=1, \dots, N$, and of a scalar power demand~$\va{d}_t$. The problem reads:
	\begin{subequations} \label{eqn:Pstrug}
	\begin{align}
		\min_{\va{x}, \va{u}} \quad& \esper{\sum_{t=1}^{T-1} \sum_{j=1}^n c_j \frac{\left(\va{u}_t^j\right)^2}{2} + \frac{\gamma_j}{2} \left(\va{x}_t^j - x_1^j\right)^2}, \\
	\intertext{where~$c_j$, $j=1, \dots, N$ and~$\gamma_j$, $j=1, \dots, N$ are given real values, subject to dynamic constraints on reservoirs:}
		& \va{x}_{t+1}^j = \va{x}_t^j + \va{a}_{t+1}^j - \va{u}_t^j, \qquad \forall t=1, \dots, T-1, \forall j=1, \dots, n, \\
	\intertext{the power demand constraint:}
		& \sum_{j=1}^n \va{u}_t^j = \va{d}_t, \qquad \forall t=1, \dots, T-1, \label{eqn:Pstrug-1} \\
	\intertext{and the non-anticipativity constraint:}
		& \va{u}_t \text{ is } \sigma\left\{\va{d}_s, s \leq t~; \va{a}_s, s \leq t\right\}\text{-measurable}.
	\end{align}
	\end{subequations}
	
	Let us denote~$\va{a}_t^\sigma \defegal \sum_{i=1}^N \va{a}_t^i$. The author then shows the following result.
	\begin{prop}[\citealp{TheseStrugarek}, Chapter~V] \label{prop:strug}
		If random variables~$(\va{d}_t, \va{a}_t)_{t=1, \dots, T}$ are independent over time, and if there exists~$\alpha>0$ such that~$\gamma_j=\alpha c_j$, for all $j=1, \dots, n$, then the optimal multiplier~$\va{\lambda}$ associated with the coupling constraints~\eqref{eqn:Pstrug-1} satisfies the following dynamics:
		\begin{align*}
			\va{\lambda}_1 &= \frac{1}{\sum_{j=1}^n \frac{1}{c_j}} \left(\va{d}_1 \left(1-\alpha\right) - \alpha \sum_{s=2}^T \esper{\va{a}_s^\sigma} - \alpha \sum_{s=2}^{T-1} \esper{\va{d}_s} \right), \\
			\va{\lambda}_{t+1} &= \va{\lambda}_t + \frac{1}{\sum_{i=1}^n \frac{1}{c_i}} \Big[\va{d}_{t+1} \left(1+\alpha\right) - \va{d}_t - \alpha \esper{\va{d}_{t+1}} \\
			&\hspace{127pt} - \alpha \left(\va{a}_{t+1}^\sigma-\esper{\va{a}_{t+1}^\sigma}\right)\Big], \qquad \forall t=1, \dots, T-2.
		\end{align*}
	\end{prop}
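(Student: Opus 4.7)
The plan is to proceed by writing down the first-order optimality conditions for the strictly convex LQ problem~\eqref{eqn:Pstrug} and exploiting the special structure induced by the hypothesis $\gamma_j=\alpha c_j$. Dualize only the coupling constraint~\eqref{eqn:Pstrug-1} with an $\mathcal{F}_t$-measurable multiplier $\va{\lambda}_t$, and substitute the dynamics so that $\va{x}_s^j-x_1^j=\sum_{r=2}^{s}\va{a}_r^j-\sum_{r=1}^{s-1}\va{u}_r^j$. Since the problem is strictly convex and the constraints are affine, Uzawa-type saddle point conditions are necessary and sufficient, and the optimal $(\va{u},\va{\lambda})$ is uniquely characterized by stationarity in $\va{u}_t^j$.

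Next, I compute the conditional gradient with respect to $\va{u}_t^j$ projected on $\trib_t$. Each control $\va{u}_t^j$ appears directly in its own quadratic cost and, through the dynamics, in every future state cost at times $s>t$. The stationarity condition reads
\begin{equation*}
c_j\,\va{u}_t^j+\va{\lambda}_t=\gamma_j\sum_{s=t+1}^{T-1}\espcond{\va{x}_s^j-x_1^j}{\trib_t},
\end{equation*}
and using $\gamma_j=\alpha c_j$, it rearranges into
\begin{equation*}
\va{u}_t^j=-\va{\lambda}_t/c_j+\alpha\sum_{s=t+1}^{T-1}\espcond{\va{x}_s^j-x_1^j}{\trib_t}.
\end{equation*}

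The key structural observation is then that summing the reservoir dynamics and using the coupling constraint telescopes into $\sum_{j}\va{x}_s^j=\sum_j x_1^j+\sum_{r=2}^{s}\va{a}_r^\sigma-\sum_{r=1}^{s-1}\va{d}_r$, which depends on noises only. Summing the stationarity relation over $j$ and injecting the constraint $\sum_{j}\va{u}_t^j=\va{d}_t$ produces an explicit formula
\begin{equation*}
\Bgp{\sum_{j}\tfrac{1}{c_j}}\va{\lambda}_t=-\va{d}_t+\alpha\sum_{s=t+1}^{T-1}\espcond{\sum_{r=2}^{s}\va{a}_r^\sigma-\sum_{r=1}^{s-1}\va{d}_r}{\trib_t}.
\end{equation*}
Evaluating this at $t=1$ (where $\trib_1=\sigma\{\va{a}_1,\va{d}_1\}$ and therefore every future $\va{a}_r^\sigma$, $\va{d}_r$ collapses to its expectation) gives the stated initialization $\va{\lambda}_1$ after a straightforward index shuffle.

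Finally, I obtain the recursion by computing the increment $\va{\lambda}_{t+1}-\va{\lambda}_t$. Because the noises are independent over time, for any $s\ge t+2$ the one-step innovation $\espcond{Y_s}{\trib_{t+1}}-\espcond{Y_s}{\trib_t}$ with $Y_s=\sum_{r=2}^{s}\va{a}_r^\sigma-\sum_{r=1}^{s-1}\va{d}_r$ is independent of $s$ and equals $(\va{a}_{t+1}^\sigma-\esper{\va{a}_{t+1}^\sigma})-(\va{d}_{t+1}-\esper{\va{d}_{t+1}})$; it is this martingale-increment identity, together with the isolated $s=t+1$ boundary term, that gives the announced closed form. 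The main obstacle is administrative rather than conceptual: one must carefully bookkeep summation indices, swap orders of summation at the boundaries $s=t+1$ and $s=T-1$, and check that the resulting cancellations reproduce exactly the coefficients $(1+\alpha)$, $1$, $-\alpha$ multiplying $\va{d}_{t+1}$, $\va{d}_t$, $\esper{\va{d}_{t+1}}$ in the statement of Proposition~\ref{prop:strug}.
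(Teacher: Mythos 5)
First, note that the paper itself offers no proof of Proposition~\ref{prop:strug}: the result is quoted from Strugarek's thesis, so there is no in-paper argument to compare against. Your overall strategy --- write the stationarity condition for each $\va{u}_t^j$, use $\gamma_j=\alpha c_j$ to divide through by $c_j$, sum over $j$ so that the coupling constraint $\sum_j\va{u}_r^j=\va{d}_r$ eliminates every control from $\sum_j(\va{x}_s^j-x_1^j)$, and then read off conditional-expectation increments --- is the natural (and essentially the only) route, and your intermediate identity $\big(\sum_j 1/c_j\big)\va{\lambda}_t=-\va{d}_t+\alpha\sum_{s=t+1}^{T-1}\espcond{Y_s}{\trib_t}$, with $Y_s=\sum_{r=2}^{s}\va{a}_r^\sigma-\sum_{r=1}^{s-1}\va{d}_r$, is correctly derived from~\eqref{eqn:Pstrug} as written.

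The gap sits exactly in the step you dismiss as ``administrative''. Because the running cost penalizes \emph{every} state $\va{x}_s^j$ for $s=t+1,\dots,T-1$, your formula for $\va{\lambda}_t$ contains a sum over $s$ of $\espcond{Y_s}{\trib_t}$, not a single term. The one-step innovation $\espcond{Y_s}{\trib_{t+1}}-\espcond{Y_s}{\trib_t}$ is indeed the same for every $s\ge t+2$, namely $(\va{a}_{t+1}^\sigma-\esper{\va{a}_{t+1}^\sigma})-(\va{d}_{t+1}-\esper{\va{d}_{t+1}})$, but there are $T-2-t$ such values of $s$, so the increment $\va{\lambda}_{t+1}-\va{\lambda}_t$ carries a factor $T-2-t$ in front of this innovation; that is incompatible with the time-independent coefficient $-\alpha$ in the stated recursion. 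Likewise, at $t=1$ the double sum $\sum_{s=2}^{T-1}\sum_{r=2}^{s}\esper{\va{a}_r^\sigma}$ assigns weight $T-r$ to $\esper{\va{a}_r^\sigma}$, which no index shuffle turns into the unit-weight sum $\sum_{s=2}^{T}\esper{\va{a}_s^\sigma}$ of the proposition. A sanity check at $T=3$ using your own formula gives $\big(\sum_j 1/c_j\big)\va{\lambda}_1=-(1+\alpha)\va{d}_1+\alpha\esper{\va{a}_2^\sigma}$, which cannot be reconciled with the claimed initialization under any sign convention, since the latter also involves $\esper{\va{a}_3^\sigma}$ and $\esper{\va{d}_2}$. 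The constant-coefficient dynamics of the proposition is what you would obtain if each $\va{u}_t^j$ affected future costs only through a single terminal aggregate, so that the sum over $s$ collapses to one term; as transcribed, the running-cost problem~\eqref{eqn:Pstrug} and the stated formulas are not consistent with one another, and the proof cannot be closed by bookkeeping alone --- you must either work with such a terminal-cost structure or carry the $t$-dependent coefficients explicitly and accept that they do not reproduce the statement.
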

	This allows the solving of subproblems using DP in dimension~3. Note that this example enters our approach if one chooses~$(\va{y}_t, \va{d}_t)$ as an information variable, with:
	\begin{align*}
		\va{y}_1 &= \frac{1}{\sum_{i=1}^n \frac{1}{c_i}} \left(\va{d}_1 \left(1-\alpha\right) - \alpha \sum_{s=2}^T \esper{\va{a}_s^\sigma} - \alpha \sum_{s=2}^{T-1} \esper{\va{d}_s} \right), \\
	\intertext{and, for all~$t=1, \dots, T-2$:}
		\va{y}_{t+1} &= \va{y}_t + \frac{1}{\sum_{i=1}^n \frac{1}{c_i}} \Big[\va{d}_{t+1} \left(1+\alpha\right) - \va{d}_t - \alpha \esper{\va{d}_{t+1}} - \alpha \left(\va{a}_{t+1}^\sigma-\esper{\va{a}_{t+1}^\sigma}\right)\Big].
	\end{align*}
	We get back to the particular case when~$\nespc{\va{\lambda}_t}{\va{y}_t^i} = \va{\lambda}_t$, with a small dimensional information variable~$\va{y}_t^i$. Note however that conditions of Proposition~\ref{prop:strug}, especially the proportionality relation on costs, make little sense in practice.
\end{example}


\subsection{Convergence} \label{ssec:convergence}

We now give convergence results about DADP and explain in more details the relation between the strategies it builds and the solution of the original problem~\eqref{eqn:P}. To make the paper self-contained, we recall in Appendix~\ref{app:uzawa} the general results concerning duality in optimization, of which the properties of DADP are direct consequences.

The approximation made on the dual process gives us a tractable way of computing strategies for each one of the subsystems. Depending on the choice we make for the information variable, it is quite clear that some strategies will lead to better results than others, concerning the value of the dual problem or the satisfaction of the coupling constraint. Let us here state more precisely these facts.

From now on, we consider a unique information variable for all subsystems. We denote it by~$\va{y}_t$ and define Hilbert spaces
\begin{equation*}
	\mathcal{Y}_t \defegal \{\va{\lambda}_t \in L^2(\omeg, \trib, \prbt): \va{\lambda}_t \text{ is } \va{y}_t\text{-measurable}\},
\end{equation*}
for every~$t=0, \dots, T-1$.
\begin{prop} \label{prop:solve}
	Consider the following optimization problem:
	\begin{subequations} \label{eqn:Pprime}
	\begin{align}
		\min_{\va{x}, \va{u}} \quad& \esper{\sum_{t=0}^{T-1} \sum_{i=1}^N C_t^i\left(\va{x}_t^i, \va{u}_t^i, \va{w}_t\right) + \sum_{i=1}^N K^i\left(\va{x}_T^i\right)}, \label{eqn:Pprime-1} \\
		\intertext{subject to the same constraints as in Problem~\eqref{eqn:Pdec} except the coupling constraint~\eqref{eqn:Pdec-7} which is replaced by:}
		&\espcond{\sum_{i=1}^N g_t^i\left(\va{x}_t^i, \va{u}_t^i, \va{w}_t\right)}{\va{y}_t} = 0, \qquad \forall t=0, \dots, T, \label{eqn:Pprime-4}.
	\end{align}
	\end{subequations}
	Suppose the Lagrangian associated with Problem~\eqref{eqn:Pprime} has a saddle point. Then DADP solves Problem~\eqref{eqn:Pprime}.
\end{prop}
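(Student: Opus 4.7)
The plan is to recognise DADP as Uzawa's algorithm applied to the projected Problem~\eqref{eqn:Pprime}, rather than to the original Problem~\eqref{eqn:Pdec}. Once this identification is established, convergence is a direct consequence of the classical Uzawa theorem recalled in Appendix~\ref{app:uzawa}, under the saddle-point assumption made in the statement.

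First, I would identify the Lagrangian associated with Problem~\eqref{eqn:Pprime}. Its multiplier~$\va{\mu} = (\va{\mu}_0, \dots, \va{\mu}_{T-1})$ naturally lies in $\prod_{t=0}^{T-1} \mathcal{Y}_t$, since the dualised constraint~\eqref{eqn:Pprime-4} is~$\va{y}_t$-measurable. By the tower property and the~$\va{y}_t$-measurability of~$\va{\mu}_t$, the coupling term reads
\begin{equation*}
	\esper{\sum_{t=0}^{T-1} \va{\mu}_t^\top \espcond{\sum_{i=1}^N g_t^i(\va{x}_t^i, \va{u}_t^i, \va{w}_t)}{\va{y}_t}} = \esper{\sum_{t=0}^{T-1} \sum_{i=1}^N \va{\mu}_t^\top g_t^i(\va{x}_t^i, \va{u}_t^i, \va{w}_t)}.
\end{equation*}
For multipliers in $\prod_t \mathcal{Y}_t$, the Lagrangian of Problem~\eqref{eqn:Pprime} thus has exactly the same shape as the Lagrangian of the original Problem~\eqref{eqn:Pdec}, and its inner minimisation decomposes into~$N$ independent subproblems whose pointwise form coincides with~\eqref{eqn:subPapprox}.

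Second, I would match the DADP iterates with those of Uzawa on Problem~\eqref{eqn:Pprime}. Let $P_t \colon L^2(\omeg, \trib, \prbt; \bbR^d) \to \mathcal{Y}_t$ denote the conditional expectation operator~$\va{\lambda}_t \mapsto \espcond{\va{\lambda}_t}{\va{y}_t}$. Since the DADP subproblem objective only involves~$P_t \va{\lambda}_t$, subproblem~\eqref{eqn:subPapprox} at iteration~$k$ coincides with the subproblem driven by the projected multiplier $\va{\mu}_t^k \defegal P_t \va{\lambda}_t^k$. Moreover, applying~$P_t$ to the scenario-wise gradient update
\begin{equation*}
	\va{\lambda}_t^{k+1} = \va{\lambda}_t^k + \rho_t \sum_{i=1}^N g_t^i\bigl(\va{x}_t^{i,k}, \va{u}_t^{i,k}, \va{w}_t\bigr)
\end{equation*}
yields $\va{\mu}_t^{k+1} = \va{\mu}_t^k + \rho_t \espcond{\sum_{i=1}^N g_t^i(\va{x}_t^{i,k}, \va{u}_t^{i,k}, \va{w}_t)}{\va{y}_t}$, which is precisely the Uzawa update for Problem~\eqref{eqn:Pprime}: the bracketed quantity is indeed the gradient of the dual functional of~\eqref{eqn:Pprime} with respect to $\va{\mu}_t$ evaluated at the current primal iterate.

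Finally, under the saddle-point hypothesis on the Lagrangian of Problem~\eqref{eqn:Pprime}, together with a step size $\rho_t$ chosen in the range prescribed in Appendix~\ref{app:uzawa}, the classical convergence result for Uzawa's algorithm yields that $(\va{\mu}^k)$ converges to an optimal multiplier of Problem~\eqref{eqn:Pprime} and that the associated primal iterates form a minimising sequence for it. The main obstacle of the argument is step two: one has to justify that the component of the DADP iterate~$\va{\lambda}_t^k$ lying in the orthogonal complement of $\mathcal{Y}_t$ is irrelevant, both because it does not enter the subproblems (by orthogonality of $P_t$ in $L^2$) and because its update decouples from that of $\va{\mu}_t^k$, so that the effective dynamics of the algorithm takes place entirely inside $\prod_t \mathcal{Y}_t$.
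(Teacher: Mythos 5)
Your proposal is correct and follows essentially the same route as the paper: identifying the conditional expectation $\espcond{\cdot}{\va{y}_t}$ as the orthogonal projection onto $\mathcal{Y}_t$ in $L^2$, using the tower property to show the dual pairing only sees the projected component of the multiplier, and recognising DADP as a projected-gradient (Uzawa) scheme on the restricted dual space, which under the saddle-point assumption is the dual of Problem~\eqref{eqn:Pprime}. Your version merely makes the identification of the iterates more explicit than the paper's terser argument.
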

\begin{proof}
	The DADP algorithm consists in:
	\begin{itemize}
		\item given a price process, solving subproblems using the projection of this price process on~$\mathcal{Y}_0 \times \cdots \times \mathcal{Y}_{T-1}$;
		\item updating the price process using a gradient formula.
	\end{itemize}
Alternatively, one may consider that the gradient formula is composed with the projection operation in the updating formula. Therefore, this algorithm may also be viewed as a projected gradient algorithm which exactly solves the following max-min problem~:
	\begin{subequations} \label{eqn:Dapprox}
	\begin{align}
		\max_{\va{\lambda}} \min_{\va{x}, \va{u}} \quad& \esper{\sum_{t=0}^T \sum_{i=1}^N \left(C_t^i\left(\va{x}_t^i, \va{u}_t^i, \va{w}_t\right) + \va{\lambda}_t^\top g_t^i\left(\va{x}_t^i, \va{u}_t^i, \va{w}_t\right)\right) + \sum_{i=1}^N K^i\left(\va{x}_T^i\right)}, \label{eqn:Dapprox-1} \\
		\text{s.t.} \quad& \va{x}_{t+1}^i = f_t^i\left(\va{x}_t^i, \va{u}_t^i, \va{w}_t\right), \qquad \forall t=0, \dots, T-1, \forall i=1, \dots, N, \label{eqn:Dapprox-2} \\
		& \va{x}_0 = \va{w}_0, \label{eqn:Dapprox-3} \\
		&\underline{x}_t^i \leq \va{x}_t^i \leq \overline{x}_t^i, \qquad \forall t=1, \dots, T, \forall i=1, \dots, N, \\
		&\underline{u}_t^i \leq \va{u}_t^i \leq \overline{u}_t^i, \qquad \forall t=0, \dots, T-1, \forall i=1, \dots, N, \\
		& \va{u}_t \text{ is } \trib_t\text{-measurable}, \qquad \forall t=0, \dots, T, \label{eqn:Dapprox-4} \\
		& \va{\lambda}_t \text{ is } \va{y}_t\text{-measurable}, \qquad \forall t=0, \dots, T. \label{eqn:Dapprox-5}
	\end{align}
	\end{subequations}
Observe that the max operation is restricted to a linear subspace defined by \eqref{eqn:Dapprox-5}.
	
	Now, if within the inner product~$\langle \pmb{a}, \pmb{b} \rangle = \esper{\pmb{a}^\top \pmb{b}}$, the variable~$\pmb{a}$ belongs to a given subspace, then the component of~$\pmb{b}$ which is orthogonal to that subspace yields~$0$ in the inner product. Hence it is useless. Put in our context, the multiplier~$\va{\lambda}_t$ can only control the part of~$g_t^i\left(\va{x}_t^i, \va{u}_t^i, \va{w}_t\right)$ which has the same measurability as~$\va{\lambda}_t$. Thus, assuming the existence of a saddle point, that is, the max and min operations can be interchanged in Problem~~\eqref{eqn:Dapprox}, this problem appears as the dual counterpart of Problem~\eqref{eqn:Pprime}.
\end{proof}

Loosely speaking, DADP somehow consists in replacing an almost-sure constraint by a constraint involving a conditional expectation with respect to a so-called information variable. So it is once again clear that if we choose the information variable~$\va{y}_t$ to be the whole history of the system, then we come back to the initial constraint and we in fact solve the original problem. This is the case of Example~\ref{ex:maxinfo}. On the contrary, putting no information at all in~$\va{y}_t$ is the same as satisfying the coupling constraint only in expectation. This is the case of Example~\ref{ex:mininfo}. Note however that it is generally a poor way of representing an almost-sure constraint.

The main difficulty is to find the information variable~$\va{y}_t$ that is going to satisfy the coupling constraint in a fairly good way while keeping the solving process of the subproblems tractable.

We now state the convergence of the DADP algorithm. Let us introduce the objective function~$J: \espacef{U}_0 \times \dots \times \espacef{U}_{T-1} \rightarrow \bbR$ associated with strategy~$\va{u}$, i.e.:
\begin{align*}
	J: \va{U} \mapsto \quad &\esper{\sum_{t=0}^{T-1} \sum_{i=1}^N C_t^i\left(\va{x}_t^i, \va{u}_t^i, \va{w}_t\right) + \sum_{i=1}^N K^i\left(\va{x}_T^i\right)}, \\
	&\text{with: } \va{x}_0 = \va{w}_0, \\
	&\text{and: } \va{x}_{t+1}^i = f_t^i\left(\va{x}_t^i, \va{u}_t^i, \va{w}_t\right), \qquad \forall t=0, \dots, T-1, \forall i=1, \dots, N.
\end{align*}
\begin{prop} \label{prop:convergence}
	If:
	\begin{enumerate}
		\item $J$ is convex, lower semi-continuous, G\^ateaux differentiable,
		\item $J$ is $\alpha$-strongly convex,
		\item all $g_t^i$ are linear and $c$-Lipschitz continuous,
		\item the Lagrangian associated with Problem~\eqref{eqn:Pprime} has a saddle point~$(\overline{\va{u}}, \overline{\va{\lambda}})$,
		\item the step-size~$\rho$ of the algorithm is such that~$0 < \rho < 2 \frac{\alpha}{c^2}$,
	\end{enumerate}
	Then:
	\begin{enumerate}
		\item there exists a unique solution~$\overline{\va{u}}$ of Problem~\eqref{eqn:Pprime},
		\item DADP converges in the sense that~:
		\begin{equation*}
			\va{u}^k \underset{k\to+\infty}{\longrightarrow} \overline{\va{u}} \text{ in } \espacef{U}_0 \times \dots \times \espacef{U}_{T-1},
		\end{equation*}
		\item the sequence~$(\va{\lambda}^k)_{k \geq 0}$ is bounded and every cluster point~$\overline{\va{\lambda}}$ in the weak topology is such that~$(\overline{\va{u}}, \overline{\va{\lambda}})$ is a saddle point of the Lagrangian associated with Problem~\eqref{eqn:Pprime}.
	\end{enumerate}
\end{prop}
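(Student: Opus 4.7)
The plan is to reduce the three claims to the standard convergence theorem for Uzawa's algorithm recalled in Appendix~\ref{app:uzawa}, applied to Problem~\eqref{eqn:Pprime}. Proposition~\ref{prop:solve} already identifies DADP with projected gradient ascent on the restricted dual of Problem~\eqref{eqn:Pprime}: since $\espcond{\cdot}{\va{y}_t}$ is the orthogonal projector in $L^2(\omeg,\trib,\prbt)$ onto the closed linear subspace $\mathcal{Y}_t$, performing the update $\va{\lambda}_t^{k+1}=\va{\lambda}_t^k + \rho\sum_i g_t^i(\va{x}_t^{i,k},\va{u}_t^{i,k},\va{w}_t)$ and reusing $\espcond{\va{\lambda}_t^{k+1}}{\va{y}_t}$ in the next subproblem is exactly projected-gradient ascent for the dual of Problem~\eqref{eqn:Pprime}, whose coupling constraint is precisely~\eqref{eqn:Pprime-4}.

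The first step is to settle existence and uniqueness of $\overline{\va{u}}$. The feasible set of Problem~\eqref{eqn:Pprime} is the intersection of the non-anticipativity subspace, the bound constraints, and the affine constraint~\eqref{eqn:Pprime-4} (affine thanks to the linearity of the $g_t^i$ in~(iii) and of the conditional expectation); it is therefore closed and convex, and non-empty by hypothesis~(iv). Combined with hypothesis~(i)--(ii), which make $J$ lower semi-continuous and $\alpha$-strongly convex, this yields a unique minimizer $\overline{\va{u}}$.

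The second step is to instantiate the Uzawa convergence theorem. Define the coupling operator $\Theta(\va{u})\defegal \bp{\espcond{\sum_{i=1}^N g_t^i(\va{x}_t^i,\va{u}_t^i,\va{w}_t)}{\va{y}_t}}_{t=0,\dots,T-1}$, with $\va{x}$ obtained from $\va{u}$ via~\eqref{eqn:Pdec-2}--\eqref{eqn:Pdec-3}. Linearity of the $g_t^i$, linearity of conditional expectation, and the $c$-Lipschitz bound in~(iii), together with the fact that conditional expectation is non-expansive on $L^2$, show that $\Theta$ is a continuous linear operator of norm at most $c$. Under hypothesis~(ii) and the step-size condition $0<\rho<2\alpha/c^2$, the classical quasi-Fej\'er estimate
\begin{equation*}
	\norm{\va{\lambda}^{k+1}-\overline{\va{\lambda}}}^2 \leq \norm{\va{\lambda}^k-\overline{\va{\lambda}}}^2 - \rho(2\alpha-\rho c^2)\norm{\va{u}^k-\overline{\va{u}}}^2
\end{equation*}
applies verbatim. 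It immediately yields boundedness of $(\va{\lambda}^k)$, strong convergence $\va{u}^k\to\overline{\va{u}}$ by summability of the right-most term, and—by passing to the limit in the saddle-point inequalities along any weakly convergent subsequence of $(\va{\lambda}^k)$—the fact that every weak cluster point $\overline{\va{\lambda}}$ completes $\overline{\va{u}}$ into a saddle point.

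The main obstacle is not any single algebraic step but the careful verification that our Hilbert-space stochastic setting, with its measurability-restricted dual space $\mathcal{Y}_0\times\cdots\times\mathcal{Y}_{T-1}$, genuinely fits the standard Uzawa template. This reduces to the two observations already implicit in Proposition~\ref{prop:solve}: first, that conditional expectation is the orthogonal projector onto $\mathcal{Y}_t$ and is therefore linear and $1$-Lipschitz, so that the restricted coupling $\Theta$ inherits the Lipschitz constant $c$ from the original $g_t^i$; and second, that restricting the dual variable to the closed linear subspace $\mathcal{Y}_0\times\cdots\times\mathcal{Y}_{T-1}$ preserves both the gradient structure of the Uzawa update (via orthogonal projection) and the saddle-point inequalities needed to close the convergence argument.
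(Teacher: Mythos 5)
Your proposal is correct and follows essentially the same route as the paper, which simply invokes the Uzawa convergence result (Theorem~\ref{thm:cohen} in Appendix~\ref{app:uzawa}) after the identification of DADP with projected gradient ascent on the restricted dual made in Proposition~\ref{prop:solve}. The only difference is that you spell out the verification the paper leaves implicit --- that conditional expectation is the orthogonal projector onto $\mathcal{Y}_t$, hence linear and non-expansive, so the restricted coupling operator inherits the Lipschitz constant $c$ and the quasi-Fej\'er estimate applies --- which is a welcome but not divergent elaboration.
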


\begin{proof}
	The convergence of the algorithm is then a direct application of Theorem~\ref{thm:cohen}, Appendix~\ref{app:uzawa}.
\end{proof}
Note that assumptions of Proposition~\ref{prop:convergence} plus the qualification of constraint~\eqref{eqn:Pprime-4} ensure that the Lagrangian associated with Problem~\eqref{eqn:Pprime} has a saddle point.

\section{Numerical experiment} \label{sec:Num}

We now show the efficiency of DADP on two numerical examples. The first one comes from a previous paper~\citep{BartyCarpentierGirardeau09} in which the authors developped a preliminary version of DADP~(see~\S\ref{ssec:previous}). We show in~\S\ref{ssec:expe1} the good performance of the new version of DADP. The second one, in~\S\ref{ssec:expe2}, is an application to a more realistic power management problem.

\subsection{Computing conditional expectations}

Within the DADP procedure, at each iteration, we have to compute conditional expectations in the criteria~\eqref{eqn:subPapprox} of the subproblems. In order to compute these conditional expectations, we used Generalized Additive Models~(GAMs), that were introduced by~\citet{HastieTibshirani}. The estimate takes the form:
\begin{equation*}
	\espcond{\va{z}}{\va{p}_1, \dots, \va{p}_n} \simeq \sum_{i=1}^n f_i\left(\va{p}_i\right).
\end{equation*}
Functions~$f_i$ are splines~(piecewise polynoms) whose characteristics are optimized by cross-validation on the input statistical data. Our purpose here is not to explain in details this methodology. The interested reader will find further explanations about this model and its implementation in the book by~\citet{Wood}. We used an easy-to-use implementation that is available within the free statistical software R~\citep[][]{R}. The GAM toolkit, called \emph{mgcv}, also returns useful indicators concerning the quality of the estimation. In particular, we use the deviance indicator, which takes value~$0$ if~$\va{z}$ is estimated as poorly as by its expectation~$\esper{\va{z}}$ and value~$1$ if the estimate is exact, i.e. if~$\sum_{i=1}^n f_i(\va{p}_i) = \va{z}$.

\begin{rem}[Kernel estimator]
	We chose to use GAMs to compute conditional expectations after a numerical comparison with the more classical kernel regression methods~\citep{Nadaraya64, Watson64} also available in the~R environment. Even though both of them gave similar results, GAMs appeared to be several times faster than the kernel method on our problem.
\end{rem}

\subsection{Back to an example from a previous paper} \label{ssec:expe1}

We first implement the new version of DADP algorithm on a simple power management problem introduced by \citet{BartyCarpentierGirardeau09}. On this small-scale example, we are able to compare DADP results to those obtained by DP and to illustrate the theoretical results described above. Let us first recall this example. Consider a power producer who owns two types of power plants:
\begin{itemize}
	\item Two hydraulic plants that are characterized at each time step $t$ by their water stock $\va{x}_t^i$ and power production $\va{u}_t^i$, and receive water inflows $\va{a}_{t+1}^i$, $i=1, 2$. Such units are usually cost-free. We however impose small quadratic costs on the hydraulic power productions in order to ensure strong convexity.
	\item One thermal unit with a production cost that is quadratic with respect to its production $\va{u}_t^3$. There are no dynamics associated with this unit.
\end{itemize}
Using these plants, the power producer must supply a power demand $\va{d}_t$ at each time step $t$, over a discrete time horizon of $T=25$ time steps. All noises, i.e. demand $\va{d}_t$ and inflows $\va{a}_t^1$ and $\va{a}_t^2$ are supposed to be overtime independent noise processes. The interested reader may find more details on this numerical experiment in the previous paper by \citet{BartyCarpentierGirardeau09}.

The problem reads:
\begin{subequations} \label{eqn:PTest}
\begin{align}
	\min_{\va{x}, \va{u}} \quad &\esper{\sum_{t=0}^{T-1} \left(\epsilon \left(\va{u}_t^1\right)^2 + \epsilon \left(\va{u}_t^2\right)^2 + L_t\left(\va{u}_t^3\right)\right) + K^1\left(\va{x}_T^1\right) + K^2\left(\va{x}_T^2\right)} \\
	\text{s.t.} \quad &\va{x}_{t+1}^i = \va{x}_t^i - \va{u}_t^i + \va{a}_{t+1}^i, \qquad \forall i=1,2, \quad \forall t=0,\dots, T-1, \\
	& \va{u}_t^1 + \va{u}_t^2 + \va{u}_t^3 = \va{d}_t, \qquad \forall t=0, \dots, T-1, \label{eqn:PTest3} \\
	&\underline{x}^i \leq \va{x}_t^i \leq \overline{x}^i, \qquad \forall i=1,2, \quad \forall t=1,\dots, T, \\
	&0 \leq \va{u}_t^i \leq \overline{u}^i, \qquad \forall i=1,2, \quad \forall t=0,\dots, T-1, \\
	&0 \leq \va{u}_t^3, \qquad \forall t=0, \dots, T-1, \\
	& \va{u}_t^i \text{ is } \sigma\big\{\va{d}_0, \va{a}_0^1, \va{a}_0^2, \dots, \va{d}_t, \va{a}_t^1, \va{a}_t^2\big\} \text{-measurable}, \quad \forall i=1,2,3.
\end{align}
\end{subequations}

In this problem, the state $\va{x}_t$ is two-dimensional, hence DP remains numerically tractable and we can use the DP solution as a reference. In order to use DADP, we choose an information variable~$\va{y}_t$ at time~$t$ that is equal to the power demand~$\va{d}_t$. This comes from the insight that the power demand is a ``global'' information and has all reasons to be useful to the subproblems.

\begin{rem}[Primal feasibility] \label{rem:feasibility}
	In order to validate the method, it has to be evaluated within a simulation procedure. For the evaluation to be fair, the strategy must be feasible. Yet, as explained in \S\ref{ssec:convergence}, DADP does not ensure that the coupling constraint \eqref{eqn:Pdec-6} is satisfied. To circumvent this difficulty, the thermal unit strategy is chosen in the simulation process so as to ensure feasibility of the coupling constraint, i.e.:
	\begin{equation} \label{eqn:SatisfyDemand}
		\va{u}_t^3 = \va{d}_t - \left(\va{u}_t^1+\va{u}_t^2\right).
	\end{equation}
	That is, DADP returns three strategies, for each of the hydraulic units and for the thermal unit. However, we use relation \eqref{eqn:SatisfyDemand} for the thermal strategy during simulations in order to ensure demand satisfaction and give an estimation of the cost of the DADP strategy.
\end{rem}

We run the algorithm for~20 iterations and depict its behaviour in Figure \ref{fig:DualValue}.
\begin{figure}[tb]
\begin{center}
	\includegraphics{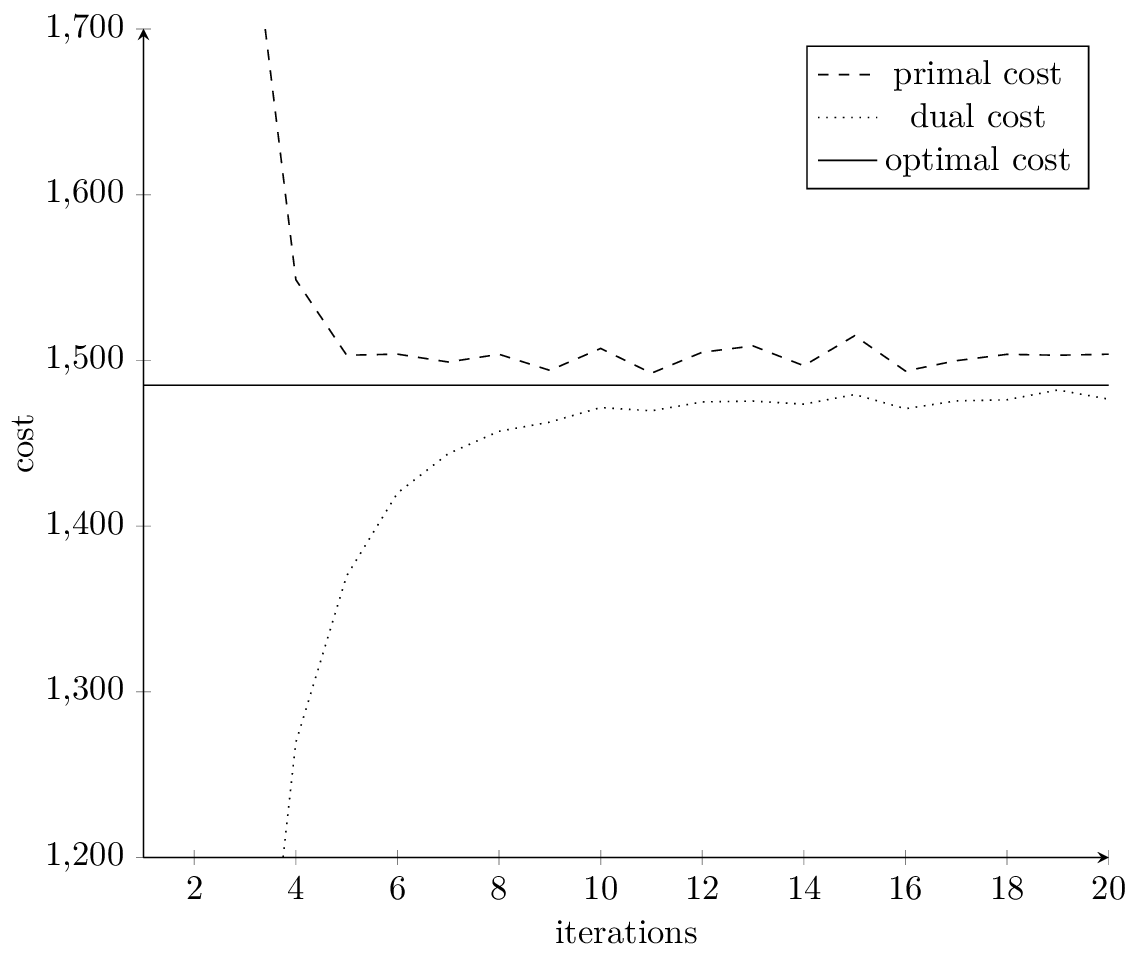}
\end{center}
\caption{\label{fig:DualValue}Primal, dual and optimal costs with respect to the number of iterations}
\end{figure}
We draw the dual cost~(evaluation of the dual function with the current strategy) and the primal cost~(the one with all constraints satisfied) at each iteration. Each point of the primal and dual curves is computed by Monte Carlo simulation over~500 scenarios. We observe the regular increase of the dual function, as expected, and the decrease of the primal function. The distance between the primal and dual costs is an upper bound for the distance to the optimal value that graphically, in this case, seems quite tight.

Moreover, the GAM toolkit used to compute the conditional expectations of the form~$\espcond{\va{\lambda}_t}{\va{d}_t}$ returns that the deviance, i.e. the quality of the explanation of~$\va{\lambda}_t$ by~$\va{d}_t$ is~98.5\%. This indicates that the marginal cost of the system is almost perfectly explained by the time variable and the power demand. Otherwise stated, using~$\espcond{\va{\lambda}_t}{\va{d}_t}$ instead of using~$\va{\lambda}_t$ within Problem~\eqref{eqn:PTest} does not alter too much the quality of the solution.

\subsection{A larger-scale SOC problem} \label{ssec:expe2}

We now apply DADP on a real-life power management problem, inspired by a case encountered at EDF, which is the major European power producer. We do not give the exact order of magnitude for costs and productions because of confidentiality issues. We consider~:
\begin{itemize}
	\item a power demand on a single node~(we neglect network issues) at each instant of a finite time horizon of~163 weeks~(one time step per week);
	\item 7~(hydraulic) stocks which are in fact aggregations of many smaller stocks;
	\item 122 other~(thermal) power units with no stock constraints.
\end{itemize}
All the thermal power units are aggregated so that the thermal cost~$\va{c}_t$ at each time~$t$ only depends on the total thermal production~$\va{u}_t^\text{th}$ and forms a quadratic cost. We note~$\va{c}_t$ using bold letters, which means that this thermal cost is random, because of the breakdowns that may happen on thermal power plants.

The problem reads:
\begin{subequations} \label{eqn:Pedf}
\begin{align}
	\min_{\va{x}, \va{u}} \quad& \esper{\sum_{t=0}^{T-1} \va{c}_t\left(\va{u}_t^\text{th}\right)}, \label{eqn:Pedf-1} \\
\intertext{subject to hydraulic stock dynamics~:}
	& \va{x}_0^i = x_0^i, \qquad \forall i=1,\dots,7, \\
	& \va{x}_{t+1}^i = \va{x}_t^i - \va{u}_t^i + \va{a}_{t}^i, \qquad \forall i=1,\dots,7, \forall t=0, \dots, T-1, \label{eqn:Pedf-3} \\
\intertext{power demand constraints~:}
	& \sum_{i=1}^7 \va{u}_t^i + \va{u}_t^\text{th} = \va{d}_t, \qquad \forall t=0, \dots, T-1, \label{eqn:Pedf-4} \\
\intertext{bound constraints on stocks and controls~:}
	& \underline{u}_t^\text{th} \leq \va{u}_t^\text{th} \leq \overline{u}_t^\text{th}, \qquad \forall t=0, \dots, T-1, \\
	& \underline{u}_t^i \leq \va{u}_t^i \leq \overline{u}_t^i, \qquad \forall i=1,\dots,7, \forall t=0, \dots, T-1, \\
	& \underline{x}_t^i \leq \va{x}_t^i \leq \overline{x}_t^i, \qquad \forall i=1,\dots,7, \forall t=0, \dots, T, \label{eqn:Pedf-7} \\
\intertext{and non-anticipativity constraints~:}
	& \va{u}_t^i \text{ is } \left(\va{w}_0, \dots, \va{w}_t\right)\text{-measurable}, \qquad \forall i=1,\dots,7, \forall t=0, \dots, T-1, \\
	& \va{u}_t^\text{th} \text{ is } \left(\va{w}_0, \dots, \va{w}_t\right)\text{-measurable}, \qquad \forall t=0, \dots, T-1,
\end{align}
\end{subequations}
with~$\va{w}_t \defegal (\va{a}_t, \va{c}_t, \va{d}_t)$ being the set of all noises that affect the system at time~$t$.

Because we consider~7 stocks, we are unable to use DP directly on this problem. In order to obtain a reference point, we use an aggregation method introduced by \citet{Turgeon80} and currently in use at EDF. This numerical method is known to be especially well-suited for the problem under consideration. It consists in solving~$N$ subproblems~(7 in our case) by 2-dimensional DP, each subproblem relying on a particular power unit, instead of one~$N$-dimensional DP problem. The idea is, for every unit, to look for strategies that depend on the stock of the unit and on an aggregation of the remaining stocks.

We then make use of DADP using three different choices for the information variable~$\va{y}_t$.
\begin{itemize}
	\item In the first setting, we replace the price at each time step by its expectation. In other words, we explain the price only by the time variable~$t$. According to Proposition~\ref{prop:convergence}, we are in fact solving Problem~\eqref{eqn:Pedf} with constraint~\eqref{eqn:Pedf-4} replaced by its expectation. Then we are able to solve each subproblem~$i$ by DP in dimension~1~(the stock variable of unit~$i$) and we obtain strategies that depend, for each unit~$i$ and each instant~$t$, on the stock~$\va{x}_t^i$ and the inflow~$\va{a}_t^i$.
	\item In the second setting, we replace the price at each time step by its conditional expectation with respect to the power demand. Put differently, we explain the price by time and demand. We still have to solve a~1-dimensional DP equation and we obtain for each instant~$t$ a strategy that depends on~$\va{x}_t^i$,~$\va{a}_t^i$ and~$\va{d}_t$.
	\item In the third setting, we replace the price at each instant by its conditional expectation with respect to the power demand and the thermal availability\footnote{The thermal availability is a scalar variable computed out of the thermal cost function~$\va{c}_t$. It gives insight on how tense the thermal generation mix is.}~$\overline{\va{p}}_t$. We then obtain a strategy that depends, for every unit~$i$ and every instant~$t$, on~$\va{x}_t^i$,~$\va{a}_t^i$,~$\va{d}_t$ and~$\overline{\va{p}}_t$.
\end{itemize}

The behaviour of the algorithm in the second setting is depicted in Figure~\ref{fig:exp2_couts}.
\begin{figure}[tb]
\begin{center}
	\includegraphics{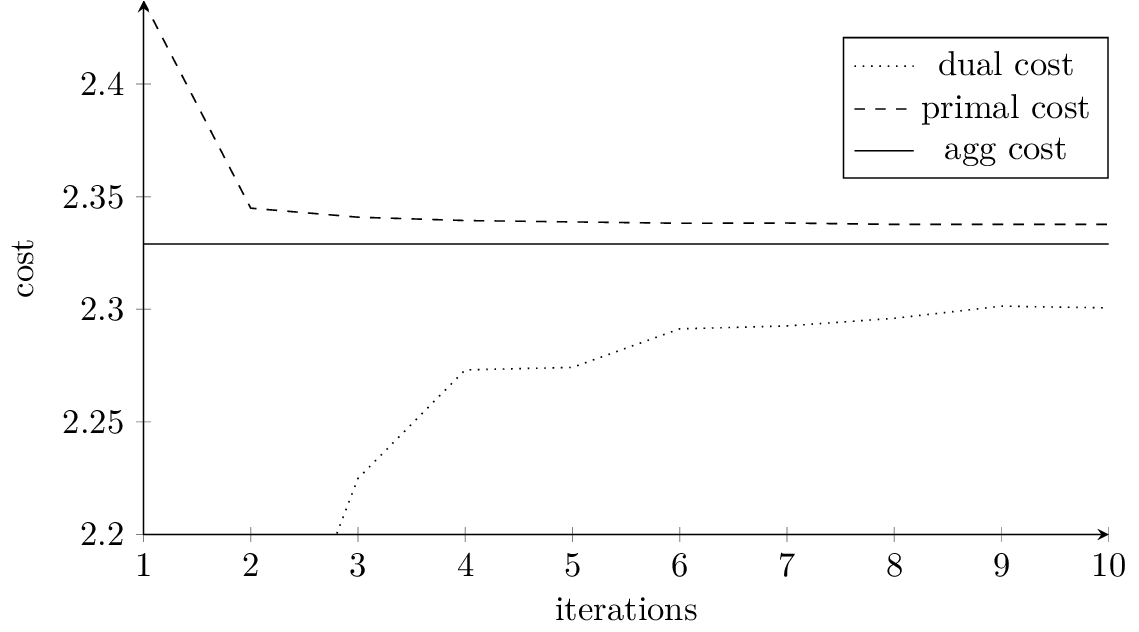}
\end{center}
\caption{\label{fig:exp2_couts}Primal and dual costs along with iterations compared to the aggregation method}
\end{figure}
We observe the increase of the dual value and the decrease of the primal value, the latter value stabilizing rapidly to a value close to the one of the aggregation method. Even though we are aware that only 10~iterations is generally much too less for this kind of primal-dual algorithm, it seems like the primal cost does not evolve significantly after~10 iterations.

In order to compare the three settings, we simulate the corresponding strategies\footnote{As in the previous example, the thermal unit strategy is chosen so as to ensure feasibility of the coupling constraint~(see Remark~\ref{rem:feasibility}).} on a large set of i.i.d. noise scenarios and compute both the mean cost and confidence interval for each strategy. The results are presented in Table~\ref{tab:Resultats}.
\begin{table}[ht]
\begin{center}
\begin{tabular}{c|c|c|c}
	 & Mean cost & $CI_{95\%}$ & Deviance \\
	\hline
	First setting & $2.363$ & $1.3 \cdot 10^{-2}$ & 50.0\% \\
	\hline
	Second setting & $2.340$ & $1.3 \cdot 10^{-2}$ & 82.4\% \\
	\hline
	Third setting & $2.338$ & $1.3 \cdot 10^{-2}$ & 86.1\%
\end{tabular}
\end{center}
\caption{\label{tab:Resultats}Results for DADP}
\end{table}
The ``Deviance'' column gives the deviance indicator returned by the GAM procedure for the estimation of the conditional expectation of the price with respect to the information variable. We observe that the DADP strategy still benefits from a good choice for the information variable~$\va{y}_t$: it appears from the mean costs comparison that adding information within the estimator improves the quality of the estimation. The mean costs differences are however not so easy to compare for the two last experiments, because the confidence interval is too large compared to the cost values. Thus we compute for each scenario the gap between costs obtained by two different strategies and draw in Figure~\ref{fig:Pdf}
\begin{figure}[tb]
\begin{center}
	\includegraphics{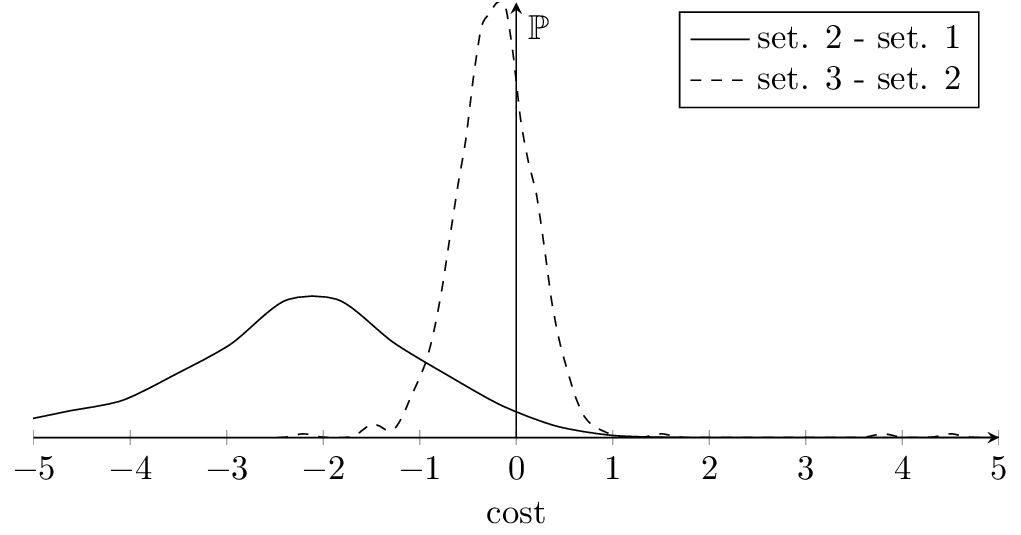}
\end{center}
\caption{\label{fig:Pdf}Distribution of cost differences between settings of DADP}
\end{figure}
the associated probability distributions. It becomes clearer that adding the thermal availability in the information variable improves the strategy: the major part of the probability weight when comparing settings~2 and~3 is negative.

As a last point, let us numerically verify that Proposition~\ref{prop:convergence} holds in our example, for instance in the first setting. Remember that, in this case, our algorithm aims at satisfying the coupling constraint only in expectation. We draw in Figure~\ref{fig:DistGap} the probability distribution of the production/demand gap at several iterations.
\begin{figure}[tb]
\begin{center}
	\includegraphics{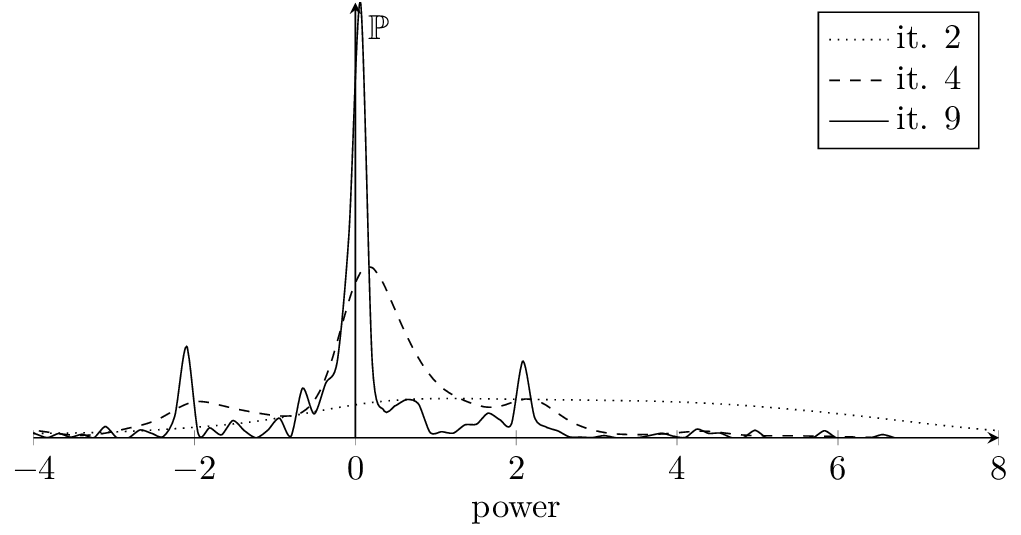}
\end{center}
\caption{\label{fig:DistGap}Distribution of the production/demand gap for a given time step}
\end{figure}
We observe that, along with iterations, the distribution of this gap becomes symmetric with respect to~$0$, the corresponding expectation hence being equal to zero.

\section*{Conclusion}

We presented an original algorithm for solving a certain kind of large-scale stochastic optimal control problems. It is based on an approximate Lagrangian decomposition: the Lagrange multiplier, which is a stochastic process in this context, is projected using a conditional expectation with respect to another stochastic process called the information process. This information process is chosen a priori and, when it has a limited memory, the solving of subproblems becomes tractable. We give theoretical results concerning the convergence of the algorithm and show how it actually solves an approximate problem, whose relation with the original problem is driven by the choice of information variable. Finally, we show on two numerical examples the efficiency of the approach.

Future works will be concerned with the application of this algorithm to more general problem structures, like chained subsystems or networks.

\appendix

\section{Duality in convex optimization} \label{app:uzawa}

The results presented here come from the paper by \citet{Cohen80}. Let~$\espacef{U}$ and~$\Lambda$ be Hilbert spaces\footnote{These results can be generalized to Banach spaces~\citep[see][]{EkelandTemam92}, but this is not necessary for our purpose.}, and~$\espacef{U}\ad$ and~$\Lambda\ad$ be subsets of~$\espacef{U}$ and~$\Lambda$~(respectively). Moreover, let us define a function~$L: \espacef{U} \times \Lambda \rightarrow \bbR$. We describe here the relations that link the so-called primal problem:
\begin{equation} \label{eqn:app_P}
	\inf_{u \in \espacef{U}\ad} \sup_{\lambda \in \Lambda\ad} L\left(u, \lambda\right),
\end{equation}
to its dual counterpart:
\begin{equation*}
	\sup_{\lambda \in \Lambda\ad} \inf_{u \in \espacef{U}\ad} L\left(u, \lambda\right).
\end{equation*}
$\espacef{U}$ is called the primal space while~$\Lambda$ is called the dual one.
\begin{dfn}[Saddle point]
	A pair~$(\overline{u}, \overline{\lambda}) \in \espacef{U}\ad \times \Lambda\ad$ is called a saddle point of~$L$ on~$\espacef{U}\ad \times \Lambda\ad$ if:
	\begin{equation*}
		L\left(\overline{u}, \lambda\right) \leq L\left(\overline{u}, \overline{\lambda}\right) \leq L\left(u, \overline{\lambda}\right), \qquad \forall u \in \espacef{U}\ad, \forall \lambda \in \Lambda\ad.
	\end{equation*}
\end{dfn}

Let us now concentrate on the case where function~$L$ corresponds to the Lagrangian of an optimization problem:
\begin{equation*}
	L\left(u, \lambda\right) = J\left(u\right) + \left\langle \lambda, g\left(u\right) \right\rangle.
\end{equation*}
The Uzawa algorithm is defined as follows. Take an initial value~$\lambda_0 \in \Lambda\ad$. At each iteration~$n \geq 0$, compute~$u_n$ by minimizing~$J\left(u\right) + \left\langle \lambda_n, g\left(u\right) \right\rangle$, and update~$\lambda_n$ using the following rule:
\begin{equation*}
	\lambda_{n+1} = \Pi_{\Lambda\ad}\left( \lambda_n + \rho_n g\left(u_n\right) \right),
\end{equation*}
with~$\rho_n$ some positive value. The following theorem gives conditions for the sequence~$(u_n)_{n \geq 0}$ to converge to the optimum of Problem~\eqref{eqn:app_P}.

\begin{thm}[\citealp{Cohen80}, Theorem~6.1] \label{thm:cohen}
	If:
	\begin{enumerate}
		\item $J$ is convex, lower semi-continuous, G\^ateaux differentiable,
		\item $J$ is $\alpha$-strongly convex,
		\item $g$ is linear and $c$-Lipschitz continuous,
		\item\label{item:saddle} $L$ has at least a saddle point~$(\overline{u}, \overline{\lambda})$,
		\item the step-size~$\rho$ of the algorithm is such that~$0 < \rho < 2 \frac{\alpha}{c^2}$,
	\end{enumerate}
	then:
	\begin{enumerate}
		\item $\overline{u}$ is unique and is a solution of Problem~\eqref{eqn:app_P},
		\item Uzawa's algorithm converges in the sense that~:
		\begin{equation*}
			u_n \underset{n\to+\infty}{\longrightarrow} \overline{u} \text{ in } \espacef{U},
		\end{equation*}
		\item the sequence~$(\lambda_n)_{n \geq 0}$ is bounded and every cluster point~$\overline{\lambda}$ in the weak topology is such that~$(\overline{u}, \overline{\lambda})$ is a saddle point of~$L$.
	\end{enumerate}
\end{thm}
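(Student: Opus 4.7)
The plan is to monitor the quantity $\|\lambda_n - \overline{\lambda}\|^2$ along the dual iterates and show it behaves as a Lyapunov function whose decrease at each step is bounded below by a positive multiple of $\|u_n - \overline{u}\|^2$.

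First I would establish uniqueness. Because $g$ is linear, adding $\langle \lambda, g(u)\rangle$ preserves $\alpha$-strong convexity, so for any $\lambda$ the map $u \mapsto L(u,\lambda)$ has at most one minimizer on $\espacef{U}\ad$; this makes both the inner minimization defining $u_n$ and the primal component $\overline{u}$ of any saddle point unambiguous. Next I would write the two first-order variational inequalities expressing optimality of $u_n$ for $L(\cdot, \lambda_n)$ and of $\overline{u}$ for $L(\cdot, \overline{\lambda})$, test the first against $\overline{u}$ and the second against $u_n$, add them and invoke strong convexity to get
\begin{equation*}
    \alpha\|u_n - \overline{u}\|^2 \le \langle \nabla J(u_n) - \nabla J(\overline{u}), u_n - \overline{u}\rangle \le \langle \overline{\lambda} - \lambda_n,\ g(u_n) - g(\overline{u})\rangle.
\end{equation*}

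Then I would exploit the fact that the saddle point condition on the $\lambda$ side is equivalent to the fixed-point identity $\overline{\lambda} = \Pi_{\Lambda\ad}(\overline{\lambda} + \rho g(\overline{u}))$ for every $\rho > 0$, so that by non-expansiveness of the projection
\begin{equation*}
    \|\lambda_{n+1} - \overline{\lambda}\|^2 \le \|\lambda_n - \overline{\lambda}\|^2 + 2\rho \langle \lambda_n - \overline{\lambda},\ g(u_n) - g(\overline{u})\rangle + \rho^2 \|g(u_n) - g(\overline{u})\|^2.
\end{equation*}
Combining with the previous bound and the $c$-Lipschitz property of $g$ yields
\begin{equation*}
    \|\lambda_{n+1} - \overline{\lambda}\|^2 \le \|\lambda_n - \overline{\lambda}\|^2 - \bigl(2\rho\alpha - \rho^2 c^2\bigr)\|u_n - \overline{u}\|^2.
\end{equation*}
Under the step-size condition $0 < \rho < 2\alpha/c^2$ the coefficient $\beta := 2\rho\alpha - \rho^2 c^2$ is strictly positive; telescoping then gives boundedness of $(\lambda_n)$ and $\sum_n \|u_n - \overline{u}\|^2 < \infty$, hence in particular $u_n \to \overline{u}$ strongly in $\espacef{U}$.

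For the final statement on weak cluster points, I would extract a subsequence $\lambda_{n_k} \rightharpoonup \widetilde{\lambda}$ in $\Lambda\ad$, use the strong convergence $u_{n_k} \to \overline{u}$ together with linearity of $g$ (so that $g(u_{n_k}) \to g(\overline{u})$) to pass to the limit in the first-order conditions for $u_{n_k}$, concluding that $\overline{u}$ minimizes $L(\cdot, \widetilde{\lambda})$; and I would pass to the limit in the projection identity defining $\lambda_{n_k+1}$ to identify $\widetilde{\lambda}$ as a fixed point of $\lambda \mapsto \Pi_{\Lambda\ad}(\lambda + \rho g(\overline{u}))$, which is exactly the dual saddle point inequality. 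The main obstacle I anticipate is controlling the bilinear coupling $\langle \lambda, g(u)\rangle$ when only weak convergence is available on the dual side; this is precisely where the linearity and Lipschitz continuity of $g$, combined with the strong convergence of $(u_n)$, are indispensable.
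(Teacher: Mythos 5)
First, a point of reference: the paper itself gives no proof of this theorem --- it is quoted from \citet{Cohen80} as background material --- so your sketch can only be judged against the classical Uzawa argument, which is indeed the route you take. The core of your sketch is sound: the two first-order variational inequalities combined with the $\alpha$-strong monotonicity of $\nabla J$, the fixed-point characterization $\overline{\lambda}=\Pi_{\Lambda\ad}\bigl(\overline{\lambda}+\rho\, g(\overline{u})\bigr)$, the non-expansiveness of the projection, and the resulting descent estimate with coefficient $2\rho\alpha-\rho^{2}c^{2}>0$ correctly yield boundedness of $(\lambda_n)$, summability of $\|u_n-\overline{u}\|^{2}$ and hence strong convergence of the primal iterates. (Minor remarks: uniqueness of the primal component across \emph{all} saddle points uses that the saddle set is a product set, and you should also say why $\overline{u}$ solves the primal problem~\eqref{eqn:app_P}; both are routine.)

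The genuine gap is in conclusion~(3). You propose to ``pass to the limit in the projection identity'' $\lambda_{n_k+1}=\Pi_{\Lambda\ad}\bigl(\lambda_{n_k}+\rho\, g(u_{n_k})\bigr)$ along a weakly convergent subsequence. The metric projection onto a closed convex subset of an infinite-dimensional Hilbert space is \emph{not} weakly sequentially continuous, so this limit passage is not justified; moreover nothing guarantees that $\lambda_{n_k+1}$ converges weakly to the same limit as $\lambda_{n_k}$. A correct argument must exploit more structure. In the setting of this paper $\Lambda\ad$ is a closed linear subspace, so $\Pi_{\Lambda\ad}$ is linear and the update reads $\lambda_{n+1}=\lambda_n+\rho\,\Pi_{\Lambda\ad}g(u_n)$; then boundedness of $(\lambda_n)$ together with $\Pi_{\Lambda\ad}g(u_n)\to\Pi_{\Lambda\ad}g(\overline{u})$ (strongly, by Lipschitz continuity of $g$ and $u_n\to\overline{u}$) forces $\Pi_{\Lambda\ad}g(\overline{u})=0$, which is precisely the dual half of the saddle-point inequality and makes the fixed-point identity trivial for every cluster point. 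For general convex $\Lambda\ad$ one must instead work with the variational characterization of the projection and the convergence of $\|\lambda_n-\overline{\lambda}\|$. Similarly, identifying $\overline{u}$ as a minimizer of $L(\cdot,\widetilde{\lambda})$ is better done by passing to the limit in the zeroth-order minimality statement $J(u_{n_k})+\langle\lambda_{n_k},g(u_{n_k})\rangle\le J(u)+\langle\lambda_{n_k},g(u)\rangle$, using lower semicontinuity of $J$ and the weak--strong pairing $\langle\lambda_{n_k},g(u_{n_k})\rangle\to\langle\widetilde{\lambda},g(\overline{u})\rangle$, rather than in the first-order condition, which would require a continuity of $\nabla J$ that is not among the hypotheses.
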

Given the other assumptions of the theorem, assumption~\eqref{item:saddle} is satisfied as long as the dualized constraint satisfies a so-called ``qualification'' condition. In addition, the latter is always satisfied for affine constraints, which is the case in our application.

\section{A lemma about decomposition} \label{app:decomposition}

We here depict in more details the reasons why a Stochastic Optimal Problem~(SOC) involving~$N$ independent\footnote{in a sense that is made clear in Lemma~\ref{lem:dec}} subsystems is equivalent, under certain conditions, to~$N$ problems where each one involves only one of the subsystems. Though this result may seem trivial at first sight, it is not true in general: the interested reader will find a counter example in the paper by~\citet{Cohen80b}.

\begin{lemma} \label{lem:dec}
Consider the following problem:
\begin{subequations} \label{eqn:Pdecf}
\begin{align}
	\min_{\va{x}, \va{u}} \quad& \esper{\sum_{t=0}^{T-1} \sum_{i=1}^N C_t^i\left(\va{x}_t^i, \va{u}_t^i, \va{w}^{i}_t,\va{z}_t\right) + \sum_{i=1}^N K^i\left(\va{x}_T^i\right)} \\
\intertext{subject to dynamics constraints:}
	&\va{x}_{t+1}^i = f_t^i\left(\va{x}_t^i, \va{u}_t^i, \va{w}^{i}_t,\va{z}_t\right), \qquad \forall t=0, \dots,  T-1, \forall i=1, \dots, N, \label{eqn:Pdecf-2} \\
	&\va{x}_0^i \text{ is given}, \qquad \forall i=1, \dots, N, \\
\intertext{as well as bound constraints:}
	&\underline{x}_t^i \leq \va{x}_t^i \leq \overline{x}_t^i, \qquad \forall t=0, \dots, T, \forall i=1, \dots, N,\label{eqn:Pdecf-3} \\
	&\underline{u}_t^i \leq \va{u}_t^i \leq \overline{u}_t^i, \qquad \forall t=0, \dots, T-1, \forall i=1, \dots, N, \label{eqn:Pdecf-4}\\
\intertext{and the non-anticipativity constraint:}
	&\va{u}_t^i \text{ is } \trib_t\text{-measurable}, \qquad \forall t=0, \dots, T-1, \forall i=1, \dots, N, \label{eqn:Pdecf-7}
\end{align}
\end{subequations}
where \( \trib_t\) is the \(\sigma\)-algebra generated by the random variables \(\{\va{w}^{i}_s,\va{z}_s\}\) for \(i=1,\dots,N\) and \(s=0,\dots,t\).
We assume that:
\begin{itemize}
  \item the \(\va{w}_{\cdot}^i\)'s and \(\va{z}_{\cdot}\) are all white noise processes,
  \item that \(\va{w}_{t}^i\) is not necessarily independent from \(\va{w}_{t}^j\) for \(j\neq i\) nor from \(\va{z}_t\).
\end{itemize}
Then, the optimal feedback solution is \emph{partially decentralized}, that is, each optimal decision \(\va{u}_t^i\), that may a priori depend on the whole \(\va{x}_t\) and the whole \(\va{w}_t\) and \(\va{z}_{t}\) according to \eqref{eqn:Pdecf-7}, indeed only depends on \((\va{x}_t^i,\va{w}_t^i,\va{z}_{t})\); the Bellman function \(V_{t}(\va{x}_t)\) is additive (\(V_{t}(\va{x}_t)=\sum_{i=1}^{N}V_{t}^{i}(\va{x}_t^{i})\)) and the optimal solution only involves the \emph{marginal} probability laws of the pairs \((\va{w}_t^i,\va{z}_{t})\) but not the \emph{joint} probability laws of the pairs \((\va{w}_t,\va{z}_{t})\).
\end{lemma}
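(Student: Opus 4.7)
The plan is to perform backward induction on $t$ using the Dynamic Programming equation~\eqref{eqn:DP} in the hazard-decision form of Remark~\ref{rem:hd}, with the induction hypothesis that the Bellman function splits across subsystems:
\begin{equation*}
  V_t(\va{x}_t) = \sum_{i=1}^N V_t^i(\va{x}_t^i).
\end{equation*}
The white-noise hypothesis on $\va{w}^i$ and $\va{z}$ ensures that Assumption~\ref{hyp:Indpdt} holds for the global noise $(\va{w}_t,\va{z}_t)$, so DP applies. The base case at $t=T$ is immediate because the terminal cost $K(\va{x}_T) = \sum_i K^i(\va{x}_T^i)$ is already additive.

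For the induction step, I would substitute into the right-hand side of the DP equation the additive cost $C_t = \sum_i C_t^i$, the separable dynamics $\va{x}_{t+1}^i = f_t^i(\va{x}_t^i, \va{u}_t^i, \va{w}_t^i, \va{z}_t)$, the additive form of $V_{t+1}$, and the per-unit bound constraints~\eqref{eqn:Pdecf-3}--\eqref{eqn:Pdecf-4}. For each frozen realization $(w_t,z_t)$, the integrand becomes a finite sum whose $i$-th term depends only on $(\va{x}_t^i, \va{u}_t^i, w_t^i, z_t)$; since the feasible set for $u$ is a Cartesian product, the inner minimization over $u=(u^1,\dots,u^N)$ splits into $N$ independent scalar minimizations. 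Interchanging this finite sum with the outer expectation, each resulting summand is an expectation of a quantity depending only on the pair $(\va{w}_t^i,\va{z}_t)$, hence only on the \emph{marginal} law of that pair. This simultaneously closes the induction and establishes the marginal-law claim, yielding
\begin{equation*}
  V_t^i(x^i) = \esper{\min_{u^i}\ C_t^i(x^i,u^i,\va{w}_t^i,\va{z}_t) + V_{t+1}^i\bigl(f_t^i(x^i,u^i,\va{w}_t^i,\va{z}_t)\bigr)}.
\end{equation*}

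The subtle point, and the reason the counter-example in~\citet{Cohen80b} is worth mentioning, is that the non-anticipativity constraint~\eqref{eqn:Pdecf-7} uses the full filtration $\trib_t$, which contains the sibling noises $\va{w}_t^j$ ($j\neq i$) and the sibling stocks $\va{x}_t^j$. One must argue that enlarging the information set used by $\va{u}_t^i$ cannot improve the cost. This is precisely what the per-realization decomposition above delivers: because no product of terms carrying distinct indices $i\neq j$ ever appears in the integrand at stage $t$, any cross-correlation among the $\va{w}_t^i$'s is washed out by linearity of expectation, and the argmin that realizes the decomposed minimization at coordinate $i$ can be chosen to depend only on $(\va{x}_t^i,\va{w}_t^i,\va{z}_t)$. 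Reading off these argmins at every stage produces the partially decentralized feedback announced in the statement. The main obstacle is thus not computational but conceptual, namely the careful separation of (a) information available, (b) information used, and (c) probabilistic structure actually entering the expectation; once that separation is made the proof reduces to a one-line induction.
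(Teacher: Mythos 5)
Your proof is correct and follows essentially the same route as the paper's: a backward induction on the additivity of the Bellman function, in which, for each frozen realization of the noises, the inner minimization over \(u=(u^1,\dots,u^N)\) splits coordinate-wise (Cartesian feasible set, separable integrand), and the outer expectation of the resulting sum involves only the marginal laws of the pairs \((\va{w}_t^i,\va{z}_t)\), yielding simultaneously the decentralized feedback, the additive value function, and the marginal-law claim. The paper's proof is the same one-step induction, stated a little more tersely.
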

\begin{proof}
The proof is by induction over time. The statement that \(V\) is additive is true at the final time~\(T\) since the final cost \(K\) is additive. Assume this is true from \(T\) to \(t+1\) (backward). The Bellman equation at \(t\) reads:
\begin{equation*}
	V_{t}(x)=\bgesp{\min_{u}\sum_{i=1}^{N}C_{t}^{i}(x^{i},u^{i},\va{w}_{t}^i,\va{z}_{t}) + \sum_{i=1}^{N}V_{t+1}^{i}\big(f_{t}^{i}(x^{i},u^{i},\va{w}_{t}^i,\va{z}_{t})\big)}\,,
\end{equation*}
in which 
\begin{itemize}
  \item the minimization operation is done over an expression is which \(x\), \(\va{z}_t\) and \(\va{w}_t^i\) are fixed (hazard-decision scheme) and the arg\,min in \(u\) parametrically depends on those values (which yields the optimal feedback function)~;
\item the minimization operation is subject to the bound constraints \eqref{eqn:Pdecf-4} for \(u^{i}\) and \eqref{eqn:Pdecf-3} for \(f_{t}^{i}(x^{i},u^{i}, \va{w}_t^i,\va{z}_t)\)~;
\item the expectation concerns random variables \((\va{w}_{t},\va{z}_{t})\) whereas \(x\) is still fixed (\(\va{X}_{t}\) and \((\va{w}_{t},\va{z}_{t})\) are independent from each other, thus this expectation may be considered as a conditional expectation knowing that \(\va{x}_{t}=x\)): this yields a function of \(x\), namely \(V_{t}(\cdot)\). 
\end{itemize}

Now observe that, at the minimization stage, each \(u^{i}\) is involved into a separate expression depending only on \(x^{i}\), \(\va{w}_t^{i}\) and \(\va{z}_t\) subject also to independent constraints, hence the claimed partially decentralized optimal feedback. Then, at the outer expectation stage, we get a sum of functions of \(x^i\) and \((\va{w}_{t}^i,\va{z}_{t})\): thus only the marginal probability law of each pair \((\va{w}_{t}^i,\va{z}_{t})\) is involved in the expectation of the corresponding term in this sum, and the result is an additive function of the \(x^{i}\), which completes the proof by induction.
\end{proof}

Let us now comment some particular cases.
\begin{itemize}
  \item If \(\va{z}_{t}\) is absent and if \(\va{w}^{i}_{\cdot}\) and  \(\va{w}^{j}_{\cdot}\) are independent whenever \(j\neq i\), then the overall problem is obviously made up of \(N\) independent subproblems; the optimal feedbacks are fully decentralized (that is \(\va{u}^{i}\) is in closed loop on \((\va{x}^{i},\va{w}^{i})\)), and the optimal controls \(\va{u}^{i}_{\cdot}\) and \(\va{u}^{j}_{\cdot}\) are also independent random variables whenever \(j\neq i\).
  \item If we drop the independency assumption about \(\va{w}^{i}_{\cdot}\) and  \(\va{w}^{j}_{\cdot}\), then the same subproblems still provide the overall problem  solution with decentralized feedbacks, but \(\va{u}^{i}_{\cdot}\) and \(\va{u}^{j}_{\cdot}\) are no longer independent. 
  \item Another ``extreme'' situation is when only the ``shared'' noise \(\va{z}\) is present in all subsystems (the \(\va{w}^{i}\)'s are supposed absent for the sake of clarity but now, \(\va{z}\) may be thought as the concatenation of all the \(\va{w}^{i}\)'s). The conclusions of the lemma are of course still valid, that is, the Bellman function is still additive and each term of this sum can be calculated in a separate subproblem, yielding a feedback on \((\va{x}^{i},\va{z})\). However the price to be payed for the presence of this shared random variable is that, first, the minimization operation in the Bellman function is parametrized by both \(x^{i}\) and \(\va{z}_t\), which may be costly if \(\va{z}_t\) is of large dimension, and, second, the outer expectation in this Bellman equation involves a multiple integral over that vector \(\va{z}_t\), which may also be costly. 
\end{itemize}

\end{document}